\theoremstyle{plain}
\newtheorem{prop}{Proposition}
\newtheorem{lem}[prop]{Lemma}
\theoremstyle{definition}
\newtheorem{rem}[prop]{Remark}
\newtheorem{ex}[prop]{Example}
\newenvironment{psmallmatrix}
  {\left(\begin{smallmatrix}}
  {\end{smallmatrix}\right)}
\title{The rings of Hilbert modular forms for $\mathbb{Q}(\sqrt{29})$ and $\mathbb{Q}(\sqrt{37})$}
\author{Brandon Williams}
\subjclass[2010]{11F27,11F41}
\address{Fachbereich Mathematik \\ Technische Universit\"at Darmstadt \\ 64289 Darmstadt, Germany}
\email{bwilliams@mathematik.tu-darmstadt.de}
\begin{document}

\begin{abstract} We use Borcherds products and their restrictions to Hirzebruch-Zagier curves to determine generators and relations for the graded rings of Hilbert modular forms for the fields $\mathbb{Q}(\sqrt{29})$ and $\mathbb{Q}(\sqrt{37})$. These seem to be the first cases where the graded ring can be computed despite obstructions to the existence of Borcherds products with arbitrary divisors.
\end{abstract}

\maketitle

\section{Introduction}

The problem considered in this note is to determine generators and relations for the graded rings $M_*(\Gamma_K)$ of Hilbert modular forms over real-quadratic fields $K$. There is some history to this. The structure has been worked out in the cases $\mathbb{Q}(\sqrt{5})$ (Gundlach, \cite{Gu}), $\mathbb{Q}(\sqrt{2})$ (Hammond, \cite{H}), $\mathbb{Q}(\sqrt{13})$ (van der Geer and Zagier, \cite{GZ}), $\mathbb{Q}(\sqrt{17})$ (Hermann, \cite{He} determined the structure of the subring of symmetric forms; the full ring appears in section 6.5 of \cite{M}), and $\mathbb{Q}(\sqrt{6})$ (van der Geer, \cite{G}). Section 6.2 of \cite{M} gives a more detailed overview. Simpler derivations for the fields $\mathbb{Q}(\sqrt{5}), \mathbb{Q}(\sqrt{13}), \mathbb{Q}(\sqrt{17})$ were given by Mayer \cite{M} using a reduction process against Borcherds products with simple zeros on the diagonal, and the work here extends this idea. \\

We only consider fields $K = \mathbb{Q}(\sqrt{p})$ of prime discriminant $p \equiv 1 \, (4)$ and class number one as this makes a number of aspects of the theory simpler (but there are ultimately workarounds for both of these assumptions). The first few such primes are $p=5,13,17,$ and the corresponding graded rings have the form

\begin{align*} M_*(\Gamma_{\mathbb{Q}(\sqrt{5})}) &= \mathbb{C}[X_2,X_5,X_6,X_{15}]/(R_{30}), \\ M_*(\Gamma_{\mathbb{Q}(\sqrt{13})}) &= \mathbb{C}[X_2,X_3,X_4,X_5,X_6,X_8,X_9] / (R_9,R_{10},R_{12},R_{18}), \\ M_*(\Gamma_{\mathbb{Q}(\sqrt{17})}) &= \mathbb{C}[X_2,X_3,X_4^{(1)},X_4^{(2)},X_5^{(1)},X_5^{(2)},X_6^{(1)},X_6^{(2)},X_7,X_8,X_9] / \mathcal{I}, \\ &\quad\quad\quad \mathcal{I} = (R_9,R_{10},R_{11},R_{12}^{(1)},R_{12}^{(2)},R_{13},R_{14},R_{18}),\end{align*} where $X_k,X_k^{(i)}$ denote modular forms of weight $k$ and where $R_k,R_k^{(i)}$ denote relations that are homogeneous of weight $k$, see \cite{M}. \\

No results seem to have been written down for larger primes. We could expect this to be more difficult for primes $p >17$, $p \equiv 1\, (4)$ because the Weil representation associated to the finite quadratic module $(\mathcal{O}_K^{\#}/ \mathcal{O}_K, N_{K/\mathbb{Q}})$ for the lattice of integers $\mathcal{O}_K$ admits nonzero cusp forms of weight $2$. More precisely, there are $\lceil \frac{p-5}{24} \rceil$ linearly independent cusp forms of weight $2$ as is well-known (and originally due to Hecke). For one thing, these cusp forms lift injectively under the Doi-Naganuma map to holomorphic differentials on the Hilbert modular surfaces such that those surfaces are never rational (see also \cite{HV}, chapter 3 for the classification of Hilbert modular surfaces) so one roughly expects the rings in question to be more complicated. More immediately, these cusp forms act as obstructions to the possible divisors of Borcherds products in the sense of section 3.2 of \cite{H1} so it is generally not possible to find modular forms with simple divisors to reduce against. \\

In this note we are able to solve this problem for the smallest two cases $\mathbb{Q}(\sqrt{29})$ and $\mathbb{Q}(\sqrt{37})$ where obstructions occur by considering the restrictions of Hilbert modular forms both to the diagonal and to other Hirzebruch-Zagier curves and by matching these simultaneously to polynomials in Eisenstein series and Borcherds products. The graded rings have the form \begin{align*} M_*(\Gamma_{\mathbb{Q}(\sqrt{29})}) &= \mathbb{C}[X_2^{(1)},X_2^{(2)},X_3^{(1)},X_3^{(2)},X_4,X_5,X_6^{(1)},X_6^{(2)},X_7,X_8,X_9] / \mathcal{I}_{29}, \\ M_*(\Gamma_{\mathbb{Q}(\sqrt{37})}) &= \mathbb{C}[X_2^{(1)},X_2^{(2)},X_3^{(1)},X_3^{(2)},X_3^{(3)},X_4^{(1)},X_4^{(2)},X_5^{(1)},X_5^{(2)},X_6^{(1)},X_6^{(2)},X_6^{(3)},X_7,X_8,X_9] / \mathcal{I}_{37}, \end{align*} where both $\mathcal{I}_{29}$ and $\mathcal{I}_{37}$ are homogeneous ideals of relations of weight between $6$ and $18$. We found minimal systems of $35$ generators of $\mathcal{I}_{29}$ and $77$ generators of $\mathcal{I}_{37}$. These are given explicitly in the ancillary material on arXiv and the author's website. \\

The methods used here likely apply to some other real-quadratic fields. Unfortunately the construction of generators is rather ad hoc, and we are unable to answer some natural questions. For example, are the graded rings always generated by Eisenstein series and Borcherds products? (Some preliminary calculations with the field $\mathbb{Q}(\sqrt{41})$ suggest that they may not be enough to produce all modular forms of weight 5.) Also, if $K \ne \mathbb{Q}(\sqrt{5})$, then are the graded rings always presented by generators of weight at most 9 and by relations of weight at most $18$? (The last question is similar to Main Theorem 1.4 of \cite{VZ}, which implies a statement of this type for all modular curves but which does not seem to apply to the Hilbert modular surfaces in an obvious way.) \\

Computing the Fourier expansion of a Borcherds product is not entirely trivial to do, even in the relatively simple case of Hilbert modular forms. (The paper \cite{GKR} discusses that problem in general and offers some improvements over the naive method of computing.) The ideals of relations would probably be difficult to work out directly. In this note we are able to avoid that problem entirely by using Koecher's principle and the existence of certain Borcherds products with convenient divisors. In particular we never need to compute the Fourier expansion of any Hilbert modular form explicitly. We only use the interpretation of restrictions along various curves on the Hilbert modular surface as elliptic modular forms, which are easier to compute. \\

\textbf{Acknowledgments.} A large part of what is presented below is the result of computations in SAGE and Macaulay2. Some easier computations were carried out on a CoCalc server. The Magma implementation of Hilbert modular forms as described in \cite{DV} was useful for checking correctness. I thank Martin Raum for providing resources with which the more time-consuming computations were performed. I am also grateful to Jan Hendrik Bruinier and John Voight for helpful discussions. This work was supported by the LOEWE research unit Uniformized Structures in Arithmetic and Geometry.

\section{Hilbert modular forms}

\textbf{2.1 Hilbert modular forms.} We quickly recall some of the basic theory of Hilbert modular forms attached to quadratic fields. The book \cite{H1} introduces Hilbert modular forms with a view toward Borcherds products and is therefore a useful reference for more details. Let $\mathbb{H}$ be the usual upper half-plane and let $K$ be a real-quadratic field of discriminant $d_K$ with ring of integers $\mathcal{O}_K$. For simplicity we assume that $K$ has class number one. A \textbf{Hilbert modular form} of (parallel) weight $k \in \mathbb{N}_0$ is a holomorphic function of two variables $f : \mathbb{H} \times \mathbb{H} \rightarrow \mathbb{C}$ which satisfies $$f\Big( \frac{a \tau_1 + b}{c \tau_1 + d}, \frac{a' \tau_2 + b'}{c' \tau_2 + d'} \Big) = (c \tau_1 + d)^k (c' \tau_2 + d')^k f(\tau_1,\tau_2)$$ for all $M = \begin{psmallmatrix} a & b \\ c & d \end{psmallmatrix} \in \Gamma_K = SL_2(\mathcal{O}_K)$. Here $a'$ denotes the conjugate of $a \in K$. \\

The invariance of $f$ under all translations $T_b = \begin{psmallmatrix} 1 & b \\ 0 & 1 \end{psmallmatrix}$, $b \in \mathcal{O}_K$ implies that $f$ has a Fourier series which we write in the form $$f(\tau_1, \tau_2) = \sum_{\nu \in \mathcal{O}_K^{\#}} a(\nu) q_1^{\nu} q_2^{\nu'}, \; \; q_i = e^{2\pi i \tau_i}, \; a(\nu) \in \mathbb{C}.$$ Here $\mathcal{O}_K^{\#}$ is the codifferent $$\mathcal{O}_K^{\#} = \{\nu \in K: \; \mathrm{Tr}_{K/\mathbb{Q}}(\nu x) \in \mathbb{Z} \; \text{for all} \; x \in \mathcal{O}_K\} = \sqrt{1 / d_K} \mathcal{O}_K.$$ The G\"otzky-Koecher principle (\cite{H1}, theorem 1.20) states that $a(\nu) = 0$ whenever either $\nu < 0$ or $\nu' < 0$ so the usual boundedness condition at cusps for automorphic forms is automatically satisfied. \\

Some of the simplest Hilbert modular forms are the \textbf{Hecke Eisenstein series} $$\mathbf{E}_k(\tau_1,\tau_2) = 1 + \frac{4}{\zeta_K(1-k)} \sum_{\substack{\nu \in \mathcal{O}_K^{\#} \\ \nu,\nu' > 0}} \sigma_{k-1}\Big( \nu \sqrt{d_K} \mathcal{O}_K \Big) q_1^{\nu} q_2^{\nu'}, \; \; k \ge 2 \; \text{even},$$ where $\zeta_K(s)$ is the Dedekind zeta function of $K$ and where $\sigma_{k-1}(\mathfrak{a}) = \sum_{\mathfrak{c} | \mathfrak{a}} N_{K/\mathbb{Q}}(\mathfrak{c})^{k-1}$ is the ideal divisor sum. When $k \ge 4$ these arise out of an averaging process similar to the Eisenstein series for $SL_2(\mathbb{Z})$, and the case $k=2$ can be recovered using Hecke's convergence trick. See \cite{H1}, section 1.5 for details. \\

Cusp forms are Hilbert modular forms that also vanish at the cusps: $(\infty,\infty)$ and $(a,a')$ for all $a \in K$. Up to equivalence by $\Gamma_K$ the number of cusps is exactly the class number of $K$. We have assumed that $K$ has class number one and therefore a form $f(\tau_1,\tau_2) = \sum_{\nu} c(\nu) q_1^{\nu} q_2^{\nu'}$ is a cusp form if and only if $c(0) = 0$. \\

One can generalize Hilbert modular forms to include a character of $SL_2(\mathcal{O}_K)$. The character group is always finite and depends up to isomorphism only on the ramification behavior of $2$ and $3$ in $\mathcal{O}_K$ (see \cite{BS}). We will only consider \textbf{symmetric characters} $\chi$: those for which $\chi(M) = \chi(M')$ for all $M \in SL_2(\mathcal{O}_K)$. This is equivalent to $f(\tau_1,\tau_2)$ and $f^{\sigma}(\tau_1,\tau_2) = f(\tau_2,\tau_1)$ having the same character. Any Hilbert modular form $f$ for a symmetric character can be decomposed into its symmetric and antisymmetric parts as $$f = f^+ + f^-, \;\; f^+ = \frac{f + f^{\sigma}}{2}, \; f^- = \frac{f - f^{\sigma}}{2}.$$ (Here $f$ is symmetric or antisymmetric if $f = f^{\sigma}$ or $f = -f^{\sigma}$, respectively.) \\

\textbf{2.2 Graded rings of Hilbert modular forms.} Let $K$ be a real-quadratic number field and set $\Gamma_K = SL_2(\mathcal{O}_K)$. The set $M_{*,char}(\Gamma_K)$ of all Hilbert modular forms with all characters is naturally a graded ring where forms are graded by both their weights and characters. We denote by $M_{*,sym}(\Gamma_K)$ and $M_*(\Gamma_K)$ the subrings of Hilbert modular forms with symmetric and trivial characters, respectively. \\

The graded rings $M_*(\Gamma_K)$ are always finitely generated. In fact the Baily-Borel theorem implies that their projective spectra $\mathrm{Proj} \, M_{*}(\Gamma_K)$ are complex surfaces. Together with the Noether normalization lemma this implies that we can always find a set of three algebraically independent forms $f_1,f_2,f_3$ such that $M_*(\Gamma_K)$ is finite over $\mathbb{C}[f_1,f_2,f_3]$ (i.e. finitely generated as a module). If $f_1,f_2,f_3$ have weights $k_1,k_2,k_3$ respectively then the theory of Hilbert series implies that there is a polynomial $p(t)$ for which $$\sum_{k=0}^{\infty} \mathrm{dim} \, M_k(\Gamma_K) t^k = \frac{p(t)}{(1 - t^{k_1})(1 - t^{k_2})(1- t^{k_3})}.$$ Of course neither $\{f_1,f_2,f_3\}$ nor their weights $\{k_1,k_2,k_3\}$ are unique. Since $M_{*,char}(\Gamma_K)$ is finite over $M_*(\Gamma_K)$ (as some power of any Hilbert modular form will have trivial character), we get the same result for the larger ring. In particular, for every character $\chi$ of $\Gamma_K$ there is a polynomial $p_{\chi}(t)$ such that, with the same exponents $k_1,k_2,k_3$, $$\sum_{k=0}^{\infty} \mathrm{dim} \, M_{k,\chi}(\Gamma_K) t^k = \frac{p_{\chi}(t)}{(1 - t^{k_1})(1 - t^{k_2})(1- t^{k_3})}.$$

\begin{rem} It was proved in \cite{TV} (theorem 3.4) that the Hilbert series of even-weight modular forms for every real quadratic field except $\mathbb{Q}(\sqrt{5})$ has the form $$\sum_{k=0}^{\infty} \mathrm{dim}\, M_{2k}(\Gamma_K) t^k = \frac{p(t)}{(1 - t)^2 (1 - t^3)}$$ for an explicit polynomial $p(t)$ of degree exactly $6$. This is not a special case of the paragraph above, and it does not hold for the full ring of modular forms.
\end{rem}

\textbf{2.3 Restrictions to Hirzebruch-Zagier curves.} Hilbert modular forms can be restricted to the Hirzebruch-Zagier cycles of \cite{HZ} to produce elliptic modular forms for $\Gamma_0(n)$. Recall that the Hirzebruch-Zagier cycle of discriminant $n$ is the set $T_n \subseteq \mathbb{H} \times \mathbb{H}$ of all points $(\tau_1,\tau_2)$ that satisfy an equation of the form $$a \tau_1 \tau_2 + \lambda \tau_1 + \lambda' \tau_2 + b = 0$$ for some $a,b \in \mathbb{Z}$ and $\lambda \in \mathcal{O}_K^{\#}$ with $ab - \lambda \lambda' = n$. These cycles are $\Gamma_K$-invariant and there are inclusions $T_n \subseteq T_{nd^2}$ for all $n,d \in \mathbb{N}$ which are strict if $d > 1$. \\

Suppose $\lambda \in \mathcal{O}_K$ is a totally positive integer of norm $\ell = \lambda \lambda'$. Then the function $$g(\tau) = \mathrm{Res}_{\lambda} f(\tau) = f(\lambda \tau, \lambda' \tau)$$ satisfies \begin{align*} g(M \cdot \tau) &= f\Big( \lambda \frac{a \tau + b}{\ell c \tau + d}, \lambda' \frac{a \tau + b}{\ell c \tau + d} \Big) \\ &= f\left( \begin{psmallmatrix} a & \lambda b \\ \lambda' c & d \end{psmallmatrix} \cdot (\lambda \tau), \begin{psmallmatrix} a & \lambda' b \\ \lambda c & d \end{psmallmatrix} \cdot (\lambda' \tau) \right) \\ &= \chi \left( \begin{psmallmatrix} a & \lambda b \\ \lambda' c & d \end{psmallmatrix} \right) (\ell c \tau + d)^{2k} g(\tau) \end{align*} for all $M = \begin{psmallmatrix} a & b \\ \ell c & d \end{psmallmatrix} \in \Gamma_0(\ell).$ Moreover the growth condition at cusps follows from that of $f$ and therefore $g(\tau)$ is a modular form of weight $2k$ and level $\Gamma_0(\ell)$ for the character $$\chi_g \left( \begin{psmallmatrix} a & b \\ \ell c & d \end{psmallmatrix} \right) = \chi \left( \begin{psmallmatrix} a & \lambda b \\ \lambda' c & d \end{psmallmatrix} \right).$$ In addition, if $\ell$ is prime and $f$ is symmetric, then we find \begin{align*} g(-1/\ell \tau) &= f\Big(-\frac{\lambda}{\ell \tau}, -\frac{\lambda'}{\ell \tau} \Big) \\ &= f\Big( -\frac{1}{\lambda' \tau}, -\frac{1}{\lambda \tau}\Big) \\ &= f\Big(-\frac{1}{\lambda \tau}, -\frac{1}{\lambda' \tau} \Big) \\ &= (\lambda \tau)^k (\lambda' \tau)^k f(\lambda \tau, \lambda' \tau) \\ &= \ell^k \tau^{2k} g(\tau), \end{align*} i.e. $g$ is an eigenform of the Atkin-Lehner involution $W_{\ell} g(\tau) = \ell^{-k} \tau^{-2k} g(-1/\ell \tau)$ with eigenvalue $+1$. Similarly, if $f$ is antisymmetric, then $g$ will have eigenvalue $-1$ under $W_{\ell}$. \\

\begin{rem} If $\ell$ is a prime and $\lambda$ is a fixed, totally positive integer of norm $\ell$, then the Hirzebruch-Zagier cycle $T_{\ell}$ is exactly the orbit of $$\{(\lambda \tau, \lambda' \tau): \; \tau \in \mathbb{H}\} \cup \{(\lambda' \tau, \lambda \tau): \; \tau \in \mathbb{H}\}$$ under $SL_2(\mathcal{O}_K)$. In that sense $f(\lambda \tau, \lambda' \tau)$ is a restriction of $f$ to $T_{\ell}$. In particular, if $f$ is either symmetric or antisymmetric (which includes Borcherds products), then it vanishes along the entire divisor $T_{\ell}$ whenever $f(\lambda \tau, \lambda' \tau) \equiv 0$ for any such element $\lambda$. Also note that $T_1$ is simply the orbit of the diagonal.
\end{rem}


\begin{rem} Even if the Hilbert modular form $f(\tau_1,\tau_2)$ vanishes along the diagonal, we can take its Taylor expansion in the variable $\tau_1$ about the point $\tau_2$: $$f(\tau_1,\tau_2) = g(\tau_2) (\tau_1 - \tau_2)^N + O((\tau_1 - \tau_2)^{N+1}), \; \; |\tau_1 - \tau_2| < y_2,$$ for some function $g$. For any matrix $M = \begin{psmallmatrix} a & b \\ c & d \end{psmallmatrix} \in SL_2(\mathbb{Z})$ compare $$f(M \cdot \tau_1, M \cdot \tau_2) = (c \tau_1 + d)^{-N} (c \tau_2 + d)^{-N} g(M \cdot \tau_2) (\tau_1 - \tau_2)^N + O((\tau_1 - \tau_2)^{N+1})$$ with $$(c \tau_1 + d)^k (c \tau_2 + d)^k f(M \cdot \tau_1, M \cdot \tau_2) = (c \tau_1 + d)^k (c \tau_2 + d)^k g(\tau_2) (\tau_1 - \tau_2)^N + O((\tau_1 - \tau_2)^{N+1})$$ to see that $g$ is a modular form of weight $2k+2N$. If $N \ge 1$, then $(\tau_1 - \tau_2)^N$ is not bounded, and therefore the growth condition on $f$ forces $g$ to be a cusp form. Similar expansions can be written out for Hilbert modular forms that vanish along the other Hirzebruch-Zagier curves, but we will not need any of these except for the diagonal.
\end{rem}

\textbf{2.4 Forced zeros.} Modular forms for $SL_2(\mathbb{Z})$ of certain weights have forced zeros at elliptic CM points. Namely, if the weight of a level one modular form $f$ is not divisible by $4$, then $f(i) = 0$; and if the weight is not divisible by $3$, then $f(\rho) = 0$ where $\rho =e^{2\pi i / 3}$. Some statements for Hilbert modular forms that are roughly in this spirit will be important later. We assume that $K = \mathbb{Q}(\sqrt{p})$ has prime discriminant $p$ and fundamental unit $\varepsilon$.

\begin{lem} (i) Every antisymmetric Hilbert modular form vanishes along the diagonal.  \\ (ii) Every symmetric Hilbert modular form of weight $k$ and character $\chi$ for which $\chi(\begin{psmallmatrix} \varepsilon & 0 \\ 0 & \varepsilon^{-1} \end{psmallmatrix}) \ne (-1)^k$ vanishes on the Hirzebruch-Zagier divisor $T_p$ of discriminant $p$. \\ (iii) Every antisymmetric Hilbert modular form of weight $k$ and character $\chi$ with $\chi(\begin{psmallmatrix} \varepsilon & 0 \\ 0 & \varepsilon^{-1} \end{psmallmatrix}) \ne (-1)^{k+1}$ vanishes on the Hirzebruch-Zagier divisor $T_p$ of discriminant $p$.
\end{lem}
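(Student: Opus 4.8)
The plan is to dispatch (i) by a one-line symmetry argument and to prove (ii) and (iii) together by restricting $f$ to the curve $T_p$ and computing the Fricke--Atkin-Lehner eigenvalue of the restriction in two independent ways.

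For (i), if $f = -f^{\sigma}$ then on the diagonal $\tau_1 = \tau_2 = \tau$ we get $f(\tau,\tau) = f^{\sigma}(\tau,\tau)\cdot(-1) = -f(\tau,\tau)$, so $f$ vanishes identically on the diagonal $T_1$. For (ii) and (iii) I first produce a totally positive generator of norm $p$. Since $p$ is a prime discriminant with $p \equiv 1\,(4)$, we have $\sqrt{p} \in \mathcal{O}_K$ and the fundamental unit $\varepsilon$ has norm $-1$; normalizing so that $\varepsilon > 0 > \varepsilon'$, the element $\lambda = \varepsilon \sqrt{p}$ satisfies $\lambda = \varepsilon\sqrt p > 0$, $\lambda' = -\varepsilon'\sqrt p > 0$, and $\lambda \lambda' = p$. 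By the discussion in §2.3 the restriction $g(\tau) = f(\lambda \tau, \lambda' \tau)$ is then a modular form of weight $2k$ on $\Gamma_0(p)$. Assuming $g \not\equiv 0$ (otherwise the Remark already forces $f$, being symmetric or antisymmetric, to vanish on all of $T_p$), the decisive step is to evaluate $g(-1/(p\tau))$ twice.

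The first evaluation is the computation of §2.3 with the character retained. Since $\lambda/p = 1/\lambda'$ and $\lambda'/p = 1/\lambda$, applying $S = \begin{psmallmatrix} 0 & -1 \\ 1 & 0 \end{psmallmatrix} \in SL_2(\mathbb{Z}) \subseteq \Gamma_K$ gives $g(-1/(p\tau)) = f(-1/(\lambda'\tau), -1/(\lambda\tau)) = \chi(S)\, p^k \tau^{2k} f(\lambda'\tau, \lambda\tau)$, and $f(\lambda'\tau,\lambda\tau) = f^{\sigma}(\lambda\tau,\lambda'\tau) = \pm g(\tau)$ with sign $+$ for symmetric $f$ and $-$ for antisymmetric $f$. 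The second evaluation uses the element $\gamma = \begin{psmallmatrix} 0 & \varepsilon \\ -\varepsilon^{-1} & 0 \end{psmallmatrix} \in \Gamma_K$, which I would check sends the branch $(\lambda\tau,\lambda'\tau)$ to $(\lambda\cdot(-1/(p\tau)), \lambda'\cdot(-1/(p\tau)))$ using $\varepsilon\varepsilon' = -1$. Its two automorphy factors evaluate to $-\varepsilon^{-1}\lambda\tau = -\sqrt{p}\,\tau$ and $-(\varepsilon')^{-1}\lambda'\tau = +\sqrt{p}\,\tau$, whose product is $(-1)^k p^k \tau^{2k}$; hence $g(-1/(p\tau)) = \chi(\gamma)(-1)^k p^k \tau^{2k} g(\tau)$. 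Factoring $\gamma = -\begin{psmallmatrix} \varepsilon & 0 \\ 0 & \varepsilon^{-1} \end{psmallmatrix} S$ and noting $\chi(-I) = 1$ for any nonzero form (since $-I$ acts trivially with automorphy factor $(-1)^{2k} = 1$) yields $\chi(\gamma) = \chi(U)\chi(S)$, where $U = \begin{psmallmatrix} \varepsilon & 0 \\ 0 & \varepsilon^{-1} \end{psmallmatrix}$.

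Comparing the two expressions and cancelling $\chi(S)$ together with the common factor $p^k\tau^{2k}g(\tau) \not\equiv 0$ leaves $\chi(U)(-1)^k = \pm 1$, i.e. $\chi(U) = (-1)^k$ in the symmetric case and $\chi(U) = (-1)^{k+1}$ in the antisymmetric case. The contrapositives are exactly statements (ii) and (iii), and the vanishing $g \equiv 0$ promotes to vanishing of $f$ along the entire divisor $T_p$ by the Remark. The main obstacle I anticipate is the second evaluation: pinning down the element $\gamma \in \Gamma_K$ that stabilizes the branch while inducing the Fricke involution on the parameter, and correctly bookkeeping its two automorphy factors, since the discrepancy between the two eigenvalue computations is precisely the factor $(-1)^k$ it contributes, which is what turns the character hypotheses into forced vanishing.
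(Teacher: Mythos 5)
Your proof is correct, and despite the Atkin--Lehner packaging it is essentially the paper's argument: the paper simply applies the single matrix $U = \begin{psmallmatrix} \varepsilon & 0 \\ 0 & \varepsilon^{-1} \end{psmallmatrix}$, which sends $(\lambda' \tau, \lambda \tau)$ to $(\lambda \tau, \lambda' \tau)$ with automorphy-factor product $N_{K/\mathbb{Q}}(\varepsilon)^{-k} = (-1)^k$, to obtain $f(\lambda \tau, \lambda' \tau) = \chi(U) (-1)^k f(\lambda' \tau, \lambda \tau)$ directly, and your own factorization $\gamma = -US$ shows that your two evaluations of $g(-1/(p\tau))$ compose to exactly this identity, the factors $\chi(S)$ and $p^k \tau^{2k}$ cancelling identically, so the Fricke involution is a harmless detour. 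All the essential bookkeeping --- the choice $\lambda = \varepsilon \sqrt{p}$, the branch swap, the sign $(-1)^k$ from the negative norm of $\varepsilon$, and promoting $g \equiv 0$ to vanishing on all of $T_p$ via the orbit remark --- matches the paper's proof.
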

\begin{proof} (i) $f(\tau_1,\tau_2) = -f(\tau_2,\tau_1)$ implies $f(\tau,\tau) = 0$ identically. \\ (ii) Since $K$ has prime discriminant, the fundamental unit $\varepsilon$ has negative norm. In particular $\lambda = \varepsilon \sqrt{p}$ is totally positive and $T_p$ is the $\Gamma_K$-orbit of $\{(\lambda \tau, \lambda'\tau) : \; \tau \in \mathbb{H}\}$. We find $$f(\lambda \tau, \lambda' \tau) = \chi \left( \begin{psmallmatrix} \varepsilon & 0 \\ 0 & \varepsilon^{-1} \end{psmallmatrix} \right) (-1)^k f(\lambda' \tau, \lambda \tau) = \chi\left( \begin{psmallmatrix} \varepsilon & 0 \\ 0 & \varepsilon^{-1} \end{psmallmatrix} \right) (-1)^k f(\lambda \tau, \lambda' \tau),$$ using the transformation under $\begin{psmallmatrix} \varepsilon & 0 \\ 0 & \varepsilon^{-1} \end{psmallmatrix}$ which sends $(\lambda' \tau, \lambda \tau)$ to $(\lambda \tau, \lambda' \tau)$ and using the symmetry of $f$. This implies the claim. \\ (iii) This uses nearly the same argument as (ii).
\end{proof}

\section{Two kinds of Borcherds products}

\textbf{3.1 Borcherds products for $\Gamma_0(p)$.} Let $p$ be a prime. We recall a generalization of theorem 14.1 of \cite{B1} which produces modular products of level $\Gamma_0(p)$. (This is itself a special case of theorem 13.3 of \cite{B2}.) Let $A_p$ denote the Kohnen plus space of nearly-holomorphic modular forms of weight $1/2$ and level $(\Gamma_0(4p),\chi_{\vartheta})$ with rational Fourier coefficients for the multiplier system $\chi_{\vartheta}$ of the classical theta function $\vartheta(\tau) = 1 + 2q + 2q^4 + 2q^9 + ...$ In other words $A_p$ consists of Laurent series $$f(q) = \sum_{n \gg -\infty} c(n) q^n, \; q = e^{2\pi i \tau}, \; c(n) \in \mathbb{Q}$$ for which $c(n) = 0$ if $n$ is a quadratic nonresidue mod $4p$, and for which $f \cdot \vartheta$ is a modular form of weight $1$ and level $\Gamma_0(4p)$ for the Dirichlet character $$\chi(n) = \begin{cases} \left( \frac{-1}{n}\right) : & n \; \text{coprime to} \; 4p \\ 0: & \text{otherwise}. \end{cases}$$

Define $$\delta_p(n) = \begin{cases} 1: & n \equiv 0 \, (p) \\ 1/2: & n \not \equiv 0 \, (p), \end{cases}$$ and for $f(\tau) = \sum_n c(n) q^n$ let $\tilde c(n) = c(n) \delta_p(n)$. For any modular form $f \in A_p$ as above, Borcherds' theorem states that there is a rational number $h \in \mathbb{Q}$ (the ``Weyl vector") such that $$\Psi_f(\tau) = q^h \cdot \prod_{n=1}^{\infty} (1 - q^n)^{\tilde c(n^2)}$$ converges for large enough $\mathrm{im}(\tau)$ and extends to a multivalued modular form of weight $c(0)$ and level $\Gamma_0(p)$ and some character. All zeros and singularities $w$ of $\Psi_f$ on $\mathbb{H}$ satisfy a quadratic equation of the form $aw^2 + bw + c = 0$ with $a,b,c \in \mathbb{Z}$ and $a \equiv 0 \, (p)$, and the order of $\Psi_f$ at such a point is a sum of coefficients for negative exponents in $f$: $$\mathrm{ord}(\Psi_f;w) = \sum_{m=1}^{\infty} \tilde c(m^2(b^2 - 4ac)).$$ In particular, $\Psi_f$ has no branch cuts if these orders are all integral, and it is a holomorphic away from the cusps if these orders are also nonnegative. The Weyl vector $h$ can be calculated as the constant term of the product of $f$ with a mock Eisenstein series of weight $3/2$ and level $4p$ which is closely related to Zagier's Eisenstein series \cite{Z}. Of course one can also expand the product above and simply guess the (unique) value of $h$.

\begin{ex} Let $p = 3$; then $A_3$ contains the input function $$f(\tau) = q^{-3} + 1 - 12q + 42q^4 - 76q^9 + 168q^{12} - 378q^{13} + 690q^{16} - 897q^{21} + 1456q^{24} - 3468q^{25} \pm ...$$ which can be computed in SAGE to any desired precision by multiplying the weight four form $$1 -  2q + q^3 - 12q^4 + 24q^5 + 44q^7 - 108q^8 - 2q^9 + 84q^{11} - 84q^{12} + O(q^{13}) \in M_4(\Gamma_0(12))$$ by the eta product $\frac{\eta(2\tau)}{\eta(\tau)^2 \eta(12\tau)^6}$. Its lift $\Psi_f$ is a modular form of weight $1$ and level $(\Gamma_0(3),\chi)$ which has simple zeros exactly on the $\Gamma_0(3)$-orbit of the CM point $w = \frac{1}{2} + \frac{\sqrt{3}}{6}i$ (i.e. the solution $w \in \mathbb{H}$ of $3w^2 - 3w + 1 = 0$). The Weyl vector is $h=0$. From the Fourier expansion $$\Psi_f(\tau) = (1 - q)^{-12/2} (1 - q^2)^{42/2} (1-q^3)^{-76} (1 - q^4)^{690/2} (1 - q^5)^{-3468/2}...= 1 + 6q + 6q^3 + 6q^4 + O(q^6)$$ we see that $\Psi_f$ is the theta function of $x^2 + xy + y^2$ and that $\chi$ is the nontrivial Nebentypus mod $3$.
\end{ex}

\begin{rem} Borcherds products are more naturally orthogonal modular forms; in the special case above, one should really consider $\Gamma_0(p)$ acting by conjugation on a certain lattice of symmetric matrices and preserving the determinant (a quadratic form of signature $(1,2)$). The Atkin-Lehner involution $W_p f(\tau) = p^{-k/2}\tau^{-k} f(-\frac{1}{p\tau})$ also preserves the determinant, and therefore all Borcherds products $\Psi_f$ are eigenforms of $W_p$. We should also mention that passing to vector-valued input functions makes the condition that $p$ is prime or even squarefree unnecessary. \\
\end{rem}

\textbf{3.2 Borcherds products for $\mathbb{Q}(\sqrt{p})$.} We recall the construction of Hilbert forms by Borcherds products (using \cite{B2},\cite{BB}; see also section 3.2 of \cite{H1} for an introduction). Let $p \equiv 1 \, (4)$ be a prime discriminant and let $K = \mathbb{Q}(\sqrt{p})$ with ring of integers $\mathcal{O}_K = \mathbb{Z}[\frac{1 + \sqrt{p}}{2}]$. One can essentially identify Hilbert modular forms of level $\mathcal{O}_K$ with orthogonal modular forms for the signature $(2,2)$ lattice $\mathcal{O}_K \oplus II_{1,1}$, where $\mathcal{O}_K$ carries the norm-form $N_{K/\mathbb{Q}}$. \\

Let $\chi$ denote the quadratic Dirichlet character attached to $K$: $\chi(n) = \left(\frac{n}{p}\right)$, and let $M_0^!(\Gamma_0(p),\chi)$ denote the space of nearly-holomorphic modular forms of weight zero, level $\Gamma_0(p)$ and Nebentypus $\chi$. Denote by $B_p$ the plus-space $$B_p = \Big\{ F = \sum_{n \gg -\infty} c(n) q^n \in M_0^!(\Gamma_0(p),\chi): \; c(n) \in \mathbb{Q}, \; c(n) = 0 \; \text{if} \; \chi(n) = -1\Big\}.$$ As before we define $\delta_p(n) = 1$ if $n \equiv 0 \, (p)$ and $\delta_p(n) = 1/2$ otherwise, and set $\tilde c(n) = c(n) \delta_p(n)$ for any series $F = \sum_n c(n) q^n$. \\

Let $f \in B_p$. As in section 3.2 of \cite{H1}, the principal part of $F$ determines a splitting of $\mathbb{R}_{>0} \times \mathbb{R}_{>0}$ into \textbf{Weyl chambers} $W$. These are the connected components that are left after removing the curves $\{(\lambda y, \lambda' y): \, y \in \mathbb{R}_{>0}\}$ for all totally positive $\lambda \in \mathcal{O}_K$ with $c(-\lambda \lambda') \ne 0$. For any Weyl chamber $W$, Borcherds' theorem states that there is a number $h_W \in K$ (the ``Weyl vector") such that $$\Psi_F(\tau_1,\tau_2) = q_1^{h_W} q_2^{h_W'} \prod_{\substack{\nu \in \mathcal{O}_K^{\#} \\ \langle \nu, W \rangle > 0}} (1 - q_1^{\nu} q_2^{\nu'})^{\tilde c(p \cdot N_{K/\mathbb{Q}}\nu)}$$ converges for all $(\tau_1,\tau_2) \in \mathbb{H} \times \mathbb{H}$ with $\mathrm{im}(\tau_1\tau_2)$ sufficiently large and that $\Psi_F$ extends to a multivalued Hilbert modular form of weight $c(0)/2$ for some character. Here $\langle \nu, W \rangle > 0$ means that $\mathrm{Tr}_{K/\mathbb{Q}}(\nu w) > 0$ for all $w \in K$ with $(w,w') \in W$. The divisor of $\Psi_F$ is a sum of Hirzebruch-Zagier cycles: $$\mathrm{div} \, \Psi_F = \sum_{n=1}^{\infty} \tilde c(-n) T_n.$$ In particular, $\Psi_F$ is holomorphic if and only if all orders $\sum_{m=1}^{\infty} \tilde c(-nm^2)$ are nonnegative integral. (Note that this does not imply that all $c(-n)$ are nonnegative.)

\begin{rem} The Borcherds products attached to the various Weyl chambers and a single input function all have the same weight and divisor so the quotient between any two is a nonzero constant (by the G\"otzky-Koecher principle). That constant is not generally $1$. From this it follows that the Borcherds products attached to the different Weyl chambers all have the same character as well.
\end{rem}

\begin{rem} A holomorphic Borcherds product $\Psi_F$ is a cusp form if and only if the compactification of its divisor intersects all cusps. This too can be read off the input function. When $K$ has class number one, it is sufficient for the input function $f$ to have a nonzero coefficient in any exponent $q^{-n}$ with $n \in N_{K/\mathbb{Q}} \mathcal{O}_K$. Therefore Hilbert modular form Borcherds products of small weight tend to be cusp forms. Indeed all Borcherds products we construct in this note for $\mathbb{Q}(\sqrt{29})$ and $\mathbb{Q}(\sqrt{37})$ are cusp forms. \\
\end{rem}

\textbf{3.3 Restricting Borcherds products.} Let $K = \mathbb{Q}(\sqrt{p})$ for a prime $p \equiv 1 \, (4)$. Suppose $\Psi_F(\tau_1,\tau_2) = q_1^h q_2^{h'} \prod_{\langle \nu, W \rangle > 0} (1 - q_1^{\nu} q_2^{\nu'})^{\tilde c(p \cdot N_{K/\mathbb{Q}} \nu)}$ is the expansion of the Borcherds lift of $$F(\tau) = \sum_{n \in \mathbb{Z}} c(n) q^n$$ within some Weyl chamber $W$, and set $\tilde c(n) = c(n)$ or $c(n)/2$ as in section 3.2. Also suppose that $\lambda \in \mathcal{O}_K$ satisfies $(\lambda,\lambda') \in W$ (in particular, $\lambda$ is totally positive) and that $N_{K/\mathbb{Q}}(\lambda) = \ell$ is $1$ or a prime. Restricting gives

$$\Psi_F(\lambda \tau, \lambda' \tau) = q^{\mathrm{Tr}(\lambda h)} \prod_{n=1}^{\infty} (1 - q^n)^{\tilde b(n^2)}$$ for the coefficients $$\tilde b(n^2) = \sum_{\substack{ \langle \nu, W \rangle > 0 \\ \mathrm{Tr}(\lambda \nu) = n}}  \tilde c(p \cdot N_{K/\mathbb{Q}} \nu) = \sum_{\mathrm{Tr}(\lambda \nu) = n} \tilde c(p \cdot N_{K/\mathbb{Q}} \nu),$$ where we use the fact that $\mathrm{Tr}(\lambda \nu) > 0$ implies $\langle \nu, W \rangle > 0$, by lemma 3.2 of \cite{Br}. This suggests that $\mathrm{Res}_{\lambda} \Psi_F(\tau) = \Psi_F(\lambda \tau, \lambda' \tau)$ itself is likely a Borcherds product as in section 2.1. This is true and for $\ell \ne p$ the input function can be produced from $F$ as follows. Divide the coefficients of all $q^n$, $n \not \equiv 0 \, (p)$ in $F$ by two (to get the modified coefficients $\tilde c(n)$), change variables $q \mapsto q^{4\ell/p}$, multiply the result by $\vartheta(\tau/p)$ where $\vartheta(\tau) = 1 + 2q + 2q^4 + 2q^9 + ...$ is the theta function, and restrict to integer exponents. In other words we take the input function $$f(\tau) = \sum_{n=0}^{\infty} \sum_{\substack{r \in \mathbb{Z} \\ 4\ell n + r^2 \equiv 0 \, (p)}} \tilde c(n) q^{(4\ell n + r^2)/p} = \sum_{m=0}^{\infty} b(m) q^m, \; \; b(m) = \sum_{\substack{r \in \mathbb{Z} \\ mp - r^2 \equiv 0 \, (4\ell)}} \tilde c\left( \frac{mp-r^2}{4\ell}\right).$$

The procedure for constructing $f$ (and in particular the proof that $f \in A_{\ell}$ is a valid input function in the sense of section 3.1) is the same as remark 10 and example 11 of \cite{W} with minor changes. That is, the vector-valued modular form corresponding to $F$ is transformed into a weak Jacobi form $\varphi(\tau,z)$ of fractional index $\frac{p}{4\ell}$ through the theta decomposition and then $f(\tau)$ is the Kohnen plus form corresponding to the vector-valued modular form $\varphi(\tau,0)$. This is ultimately the same procedure as the theta-contraction of \cite{Ma}.\\

The square coefficients are $$b(n^2) = \sum_{\substack{r \in \mathbb{Z} \\ pn^2 - r^2 \equiv 0 \, (4\ell)}} \tilde c\left( p \cdot N_{K/\mathbb{Q}}\nu\right), \; \; \nu = \frac{n - r / \sqrt{p}}{2 \lambda}.$$ Note that if $n \not \equiv 0 \, (\ell)$ and $pn^2 - r^2 \equiv 0 \, (4\ell)$, then exactly one of $\sqrt{p}n \pm r$ is divisible by $2\lambda$ in $\mathcal{O}_K$ (and the other is divisible by $2\lambda'$) because $\lambda$ is prime and $\lambda \nmid \mathrm{gcd}(\sqrt{p}n+r,\sqrt{p}n-r)$. (Remember that we assume $\mathcal{O}_K$ has class number one.) If $n \equiv 0 \, (\ell)$ and therefore also $r \equiv 0 \, (\ell)$, then both $\sqrt{p}n \pm r$ are divisible by $2\ell$ and therefore by $2\lambda$. Altogether it follows that the $\tilde b(n^2) = b(n^2) \delta_p(n)$ satisfy

$$\tilde b(n^2) = \sum_{\substack{\nu \in \mathcal{O}_K^{\#} \\ \mathrm{Tr}_{K/\mathbb{Q}}(\lambda \nu) = n}} \tilde c(p \cdot N_{K/\mathbb{Q}}\nu) = \begin{cases} b(n^2)/2: \; n \not \equiv 0 \, (p) \\ b(n^2): \; n \equiv 0 \, (p), \end{cases}$$ and therefore that $\mathrm{Res}_{\lambda} \Psi_F(\tau) = \Psi_f(\tau)$ holds. (That the Weyl vectors match up follows from the fact that $\mathrm{Res}_{\lambda} \Psi_F(\tau)$ and $\Psi_f(\tau)$ are meromorphic modular forms of the same weight and level whose Fourier expansions are equal up to a shift; so their quotient is a power of $q = e^{2\pi i \tau}$ which is invariant under $SL_2(\mathbb{Z})$ and therefore equals $1$. It does not seem to be as easy to show that the Weyl vectors match directly from their definitions.) \\

When $\ell = p$ the procedure of theta contraction is similar. The only difference in the result is that we do not divide any of the coefficients of $F$ by 2; instead we multiply $F(4\tau) \vartheta(\tau/p)$ directly and restrict to integer exponents.

\begin{ex} Let $p = 5$. Then $B_5$ contains the input function $$F(\tau) = 2q^{-1} + 10 + 22q - 108q^4 + 110q^5 + 88q^6 - 790q^9 + 680q^{10} \pm ...$$ which lifts to Gundlach's weight 5 cusp form $s_5(\tau_1,\tau_2)$ for $\mathbb{Q}(\sqrt{5})$ that vanishes exactly on the $\Gamma_K$-orbit of the diagonal \cite{Gu}. There is a Weyl chamber of $F$ with Weyl vector $h = \frac{1}{2} - \frac{\sqrt{5}}{10}$. We compute the restriction to the Hirzebruch-Zagier curve $T_5$ as follows. Take the totally positive element $\lambda_5 = \frac{5 + \sqrt{5}}{2}$ of norm $5$, which satisfies $\mathrm{Tr}_{K/\mathbb{Q}}(\lambda_5 h) > 0$. We multiply \begin{align*} &\quad F(4\tau) \vartheta(\tau/5) \\ &=\Big( 2q^{-4} + 10 + 22q^4 - 108q^{16} + 110q^{20} + 88q^{24} \pm ... \Big) \Big( 1 + 2q^{1/5} + 2q^{4/5} + 2q^{9/5} + ... \Big) \\ &= 2q^{-4} + 4q^{-19/5} + 4q^{-16/5} + 4q^{-11/5} + 4q^{-4/5} + 10 + 20q^{1/5} + 20q^{4/5} + 4q + ...\end{align*} and restrict to integer exponents to find the input function $$f(\tau) = 2q^{-4} + 10 + 4q + 22q^4 + 20q^5 + 44q^9 - 104q^{16} + 130q^{20} \pm ... \in A_5.$$ Its lift is \begin{align*} s_5(\lambda_5 \tau, \lambda_5' \tau) &= q^{\mathrm{Tr}_{K/\mathbb{Q}}(\lambda_5 h)} (1 - q)^2(1-q^2)^{11} (1 - q^3)^{22} (1 - q^4)^{-52}... \\ &= q^2 (1 - 2q - 10q^2 + 140q^4 + ...) \\ &= \frac{1}{4} \eta(\tau)^8 \eta(5\tau)^8 (5 E_2(5\tau) - E_2(\tau)), \end{align*} where $E_2(\tau) = 1 - 24 \sum_{n=1}^{\infty} \sigma_1(n) q^n$ with $\sigma_1(n) = \sum_{d | n} d$, and $\eta(\tau) = q^{1/24} \prod_{n=1}^{\infty} (1 - q^n)$. In particular, the case of Borcherds' theorem in section 3.1 shows that this has simple zeros exactly on the $\Gamma_0(5)$-orbit of the discriminant $4$ CM point $\tau = \frac{2+i}{5}$, so $(\lambda_5 \tau,\lambda_5' \tau)$ represents the intersection $T_1 \cap T_5$. \\

To compute the restriction onto the curve $T_{11}$ with $\lambda_{11} = \frac{7 + \sqrt{5}}{2}$, we have to pass from the coefficients $c(n)$ to $\tilde c(n)$. Therefore we multiply \begin{align*} &\Big( q^{-44/5} + 10 + 11q^{44/5} - 54q^{176/5} + 110q^{44} + 44q^{264/5} \pm ... \Big) \Big(1 + 2q^{1/5} + 2q^{4/5} + 2q^{9/5} + ... \Big) \\ &= q^{-44/5} + 2q^{-43/5} + 2q^{-8} + 2q^{-7} + 2q^{-28/5} + 2q^{-19/5} + 2q^{-8/5} + 10 + 20q^{1/5} + 20q^{4/5} + 2q + ... \end{align*} and restrict to integer exponents to find the input form $$f(\tau) = 2q^{-8} + 2q^{-7} + 10 + 2q + 2q^4 + 20q^5 + 22q^9 + 22q^{12} + ... \in A_{11}$$ whose Borcherds lift is \begin{align*} s_5(\lambda_{11} \tau, \lambda_{11}' \tau) &= q^{\mathrm{Tr}_{K/\mathbb{Q}}(\lambda_{11} h)} (1 - q)(1-q^2)(1-q^3)^{11} ... \\ &= q^3 (1 - q - q^2 - 10q^3 + 10q^5 + 121q^6 \pm ...) \in S_{10}(\Gamma_0(11)). \end{align*} Note that $\lambda_5$ and $\lambda_{11}$ lie in the same Weyl chamber of $F$ so there really is a single Hilbert modular form with these restrictions. In general this would only hold up to a constant multiple as in remark 7.
\end{ex}

\section{The graded ring of Hilbert modular forms for $\mathbb{Q}(\sqrt{29})$}

Let $K = \mathbb{Q}(\sqrt{29})$ with ring of integers $\mathcal{O}_K = \mathbb{Z}[\frac{1 + \sqrt{29}}{2}]$. Since $2$ and $3$ both remain inert in $\mathcal{O}_K$, there are no nontrivial characters of $SL_2(\mathcal{O}_K)$. \\

The principal parts that extend to nearly-holomorphic forms in $M_0^!(\Gamma_0(29),\chi)$ can be determined by theorem 3.49 of \cite{H1}; essentially the only obstruction is that the product with the unique cusp form in the plus space $$q - 3q^4 - 3q^5 + 5q^6 + 2q^7 - 2q^9 - q^{13} - q^{16} \pm ... \in S_2(\Gamma_0(29),\chi)$$ has constant term zero. In this way one can prove that there exist nearly-holomorphic modular forms $F_2,F_3,G_3,F_4,F_6,G_6$ of weight $0$ and level $\Gamma_0(29)$ and Nebentypus $\chi(d) = \left(\frac{29}{d}\right)$ whose Fourier expansions begin as follows: \begin{align*} F_2(\tau) &= 2q^{-4} + 6q^{-1} + 4 + 2q - 2q^4 + 8q^5 - 2q^6 + 14q^7 + 4q^9 - 10q^{13} \pm ... \\ F_3(\tau) &= 2q^{-6} + 4q^{-4} + 2q^{-1} + 6 - 6q - 2q^4 + 12q^5 + 16q^6 + 18q^7 + 28q^9 - 10q^{13}  \pm ... \\ G_3(\tau) &= 2q^{-5} + 6q^{-1} + 6 + 10q + 4q^4 + 12q^5 + 0q^6 + 0q^7 - 6q^9 + 20q^{13} \pm ... \\ F_4(\tau) &= 2q^{-6} + 2q^{-5} + 2q^{-4} + 2q^{-1} + 8 + 2q + 4q^4 + 16q^5 + 18q^6 + 4q^7 + 18q^9 + 20q^{13} \pm ... \\ F_6(\tau) &= q^{-29} + 6q^{-1} + 12 + 30q + 124q^4 + 162q^5 + 252q^6 + 336q^7 + 648q^9 + 2050q^{13} + ... \\  G_6(\tau) &= 2q^{-16} + 2q^{-6} + 2q^{-5} - 2q^{-1} + 12 -10q - 34q^4 - 48q^5 - 70q^6 + 80q^7 + 186q^9 + 336q^{13} \pm ... \end{align*}

%

The Borcherds lifts $\phi_i = \Psi_{F_i}$, $\psi_i = \Psi_{G_i}$ have weight $i$, and their divisors can be read off the principal parts of the input functions:

$$\mathrm{div}\, \phi_2 = 3T_1 + T_4, \; \; \mathrm{div} \, \phi_3 = T_1 + 2T_4 + T_6, \; \; \mathrm{div} \, \psi_3 = 3T_1 + T_5,$$ $$\mathrm{div} \, \phi_4 = T_1 + T_4 + T_5 + T_6, \; \; \mathrm{div} \, \phi_6 = 3T_1 + T_{29}, \; \; \mathrm{div} \, \psi_6 = -T_1 + T_5 + T_6 + T_{16}.$$

We will also need the following (holomorphic) quotients of the above forms as generators:

\begin{align*} \phi_5 &= \frac{\psi_3 \phi_4}{\phi_2}, \; \; \mathrm{div} \, \phi_5 = T_1 + 2T_5 + T_6; \\ \phi_7 &= \frac{\phi_3 \phi_6}{\phi_2}, \; \; \mathrm{div} \, \phi_7 = T_1 + T_4 + T_6 + T_{29}; \\ \phi_8 &= \frac{\phi_3 \psi_3 \phi_6}{\phi_2^2}, \; \; \mathrm{div} \, \phi_8 = T_1 + T_5 + T_6 + T_{29}; \\ \phi_9 &= \frac{\phi_3^2 \psi_3 \phi_6}{\phi_2^3}, \; \; \mathrm{div} \, \phi_9 = -T_1 + T_4 + T_5 + 2T_6 + T_{29}.\end{align*}

All are cusp forms. We fix $\lambda_1 = 1$ and $\lambda_5 = \frac{7 + \sqrt{29}}{2}$ and compute restrictions up to a constant multiple (which does not matter here and is omitted by abuse of notation) using the procedure in section 3.3:

\begin{align*} \mathrm{Res}_{\lambda_1} \psi_6(\tau) &= \Delta(\tau) = q - 24q^2 + 252q^3 - 1472q^4 + 4830q^5 \pm ... \\ \mathrm{Res}_{\lambda_1} \phi_9(\tau) &= \Delta(\tau) E_6(\tau) = q - 528q^2 - 4284q^3 + 147712q^4 \pm ... \\ \mathrm{Res}_{\lambda_1} \mathbf{E}_2(\tau) &= E_4(\tau) = 1 + 240q + 2160q^2 + 6720q^3 + 17520q^4 + ...\end{align*} (none of which actually require computation because the respective spaces of modular forms or cusp forms are one-dimensional) and

\begin{align*} \mathrm{Res}_{\lambda_5} \phi_2(\tau) &= s_4(\tau) = q - 4q^2 + 2q^3 + 8q^4 - 5q^5 \pm ... \\ \mathrm{Res}_{\lambda_5} \phi_3(\tau) &= s_4(\tau) e_2(\tau) = q + 2q^2 - 4q^3 - 28q^4 + 25q^5 \pm ... \\ \mathrm{Res}_{\lambda_5} \phi_4(\tau) &= s_4(\tau)^2 = q^2 - 8q^3 + 20q^4 - 70q^6 \pm ... \\ \mathrm{Res}_{\lambda_5} \phi_6(\tau) &= s_4(\tau)^2 e_4(\tau) = q^2 - 18q^3 + 10q^4 + 240q^5 \pm ... \\ \mathrm{Res}_{\lambda_5} \phi_7(\tau) &= e_2(\tau) e_4(\tau) s_4(\tau)^2 = q^2 - 12q^3 - 80q^4 + 870q^6 \pm ... \\ \mathrm{Res}_{\lambda_5} \mathbf{E}_2(\tau) &= e_2(\tau)^2 - 4s_4(\tau) = 1 + 8q +  88q^2 + 256q^3 + 664q^4 + 1400q^5 + ...\end{align*} with the omitted restrictions above being zero, where we have fixed the following generators of the ring of modular forms for $\Gamma_0(5)$:

 \begin{align*} e_2(\tau) &= \frac{5 E_2(5 \tau) - E_2(\tau)}{4} = 1 + 6q + 18q^2 + 24q^3 + 42q^4 + 6q^5 + ... \\ e_4(\tau) &= \frac{25 E_4(5 \tau) - E_5(\tau)}{24} = 1 - 10q - 90q^2 - 280q^3 - 730q^4 - 1010q^5 - ... \\ s_4(\tau) &= \eta(\tau)^4 \eta(5\tau)^4 = q - 4q^2 + 2q^3 + 8q^4 - 5q^5 \pm ... \end{align*} which satisfy a single relation $$e_4^2 = e_2^4 - 44 s_4 e_2^2 - 16s_4^2$$ in weight $8$. Note that $e_2,e_4$ are eigenforms of the Atkin-Lehner involution $W_5$ with eigenvalue $-1$ while $s_4$ has eigenvalue $+1$ under $W_5$.

\begin{lem} Let $f$ be a Hilbert modular form of parallel weight for $\mathbb{Q}(\sqrt{29})$ and let $\lambda_5 = \frac{7 + \sqrt{29}}{2}$. Then the restriction $f(\lambda_5 \tau, \lambda_5' \tau)$ of $f$ to $T_5$ coincides with the restriction of some polynomial in $\mathbf{E}_2,\phi_2,\phi_3,\phi_6,\phi_7$.
\end{lem}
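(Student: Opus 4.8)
Since $\mathrm{Res}_{\lambda_5}$ is multiplicative, the restrictions of polynomials in $\mathbf{E}_2,\phi_2,\phi_3,\phi_6,\phi_7$ are exactly the elements of the graded subring $R\subseteq M_*(\Gamma_0(5))$ generated by the five computed restrictions $e_2^2-4s_4,\ s_4,\ e_2s_4,\ e_4s_4^2,\ e_2e_4s_4^2$. Each of these lies in the image of $\mathrm{Res}_{\lambda_5}$, so $R\subseteq\mathrm{Res}_{\lambda_5}(M_*(\Gamma_K))$, and the lemma is equivalent to the reverse inclusion $\mathrm{Res}_{\lambda_5}(M_*(\Gamma_K))\subseteq R$. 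The plan is to describe $R$ as an explicit submodule of $M_*(\Gamma_0(5))$ and then to show, case by case, that the symmetry and cuspidal properties of a Hilbert modular form force its restriction into $R$.

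First I would record the module structure. The relation $e_4^2=e_2^4-44s_4e_2^2-16s_4^2$ exhibits $M_*(\Gamma_0(5))$ as a free module of rank four over $P:=\mathbb{C}[e_2^2,s_4]=\mathbb{C}[\mathrm{Res}_{\lambda_5}\mathbf{E}_2,\mathrm{Res}_{\lambda_5}\phi_2]$, with basis $1,e_2,e_4,e_2e_4$. Since $e_4^2\in P$, all products of the five generators fold back into this basis, and a direct computation gives $R=P\oplus P(e_2s_4)\oplus P(e_4s_4^2)\oplus P(e_2e_4s_4^2)$. Writing $f=f^++f^-$, section 2.3 shows $\mathrm{Res}_{\lambda_5}f^+\in P\oplus P(e_2e_4)$ and $\mathrm{Res}_{\lambda_5}f^-\in P(e_2)\oplus P(e_4)$. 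Because $e_2^2$ and $s_4$ both have weight $4$, exactly one basis component is available in each weight: for $f^+$ of weight $k$ the restriction is a pure $P$-multiple of $1$ when $k$ is even and of $e_2e_4$ when $k$ is odd, while for $f^-$ it is a pure multiple of $e_2$ when $k$ is odd and of $e_4$ when $k$ is even. Comparing with the description of $R$, the lemma splits into four statements: (a) $k$ even, $f^+$: the restriction lies in $P$, hence automatically in $R$; (b) $k$ odd, $f^-$: the $e_2$-coefficient must be divisible by $s_4$; (c) $k$ odd, $f^+$: the $e_2e_4$-coefficient must be divisible by $s_4^2$; (d) $k$ even, $f^-$: the $e_4$-coefficient must be divisible by $s_4^2$.

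Case (a) is immediate. For case (b), every antisymmetric form is a cusp form (its zeroth Fourier coefficient satisfies $a(0)=-a(0)$, so $a(0)=0$), whence its restriction vanishes at the cusp $\infty$ of $\Gamma_0(5)$; since $e_2(\infty)\ne0$ this forces the $e_2$-coefficient to vanish at $\infty$, which for a homogeneous element of $P$ means divisibility by $s_4$, so $\mathrm{Res}_{\lambda_5}f^-\in R$. The remaining cases (c) and (d) are exactly the situations in which the forced-zero lemma of section 2.4 applies: for $k$ odd with $f$ symmetric, and for $k$ even with $f$ antisymmetric, the trivial-character hypothesis gives $\chi=1\ne(-1)^k$ and $\chi=1\ne(-1)^{k+1}$ respectively, so by parts (ii) and (iii) of that lemma $f$ vanishes on the Hirzebruch--Zagier divisor $T_{29}$. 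I would use this extra vanishing to upgrade the order of vanishing of the restriction at the cusp from $1$ (cusp form) to $2$, which is precisely the divisibility by $s_4^2$ required for membership in $R$.

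The main obstacle is this last step: making precise that vanishing on $T_{29}$ contributes an additional unit of vanishing of $\mathrm{Res}_{\lambda_5}f$ at the cusps of $\Gamma_0(5)$. Concretely one must analyze how $T_5$ meets $T_{29}$ over the cusp $(\infty,\infty)$ of the Hilbert modular surface, or equivalently show at the level of Fourier expansions that the smallest exponent $\mathrm{Tr}_{K/\mathbb{Q}}(\lambda_5\nu)$ occurring in the restriction of a form vanishing on $T_{29}$ is at least $2$. Since the two cusps of $\Gamma_0(5)$ are interchanged by $W_5$ and the restrictions are $W_5$-eigenforms, it suffices to control the vanishing at $\infty$ alone; the forced-zero lemma together with the fact that $T_{29}$ passes through the cusp (as $29=N_{K/\mathbb{Q}}(\sqrt{29})$ is an ideal norm) is what I expect to drive this estimate. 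Should a purely geometric argument prove awkward, the same conclusion can be reached by a dimension count, comparing $\dim M_k^{\pm}(\Gamma_K)$ modulo the forms vanishing on $T_5$ against $\dim R_{2k}^{\pm}$ read off from the module description above.
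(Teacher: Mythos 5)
Your reduction of the lemma to the inclusion $\mathrm{Res}_{\lambda_5}(M_*(\Gamma_K))\subseteq R$ with $R=P\oplus P\,e_2s_4\oplus P\,e_4s_4^2\oplus P\,e_2e_4s_4^2$, $P=\mathbb{C}[e_2^2,s_4]$, is correct and is a clean repackaging of the paper's setup; your cases (a) and (b) genuinely prove the same statements as the paper's cases 1 and 2 (the paper likewise uses the $W_5$-eigenvalue, the weight mod $4$, and cuspidality of antisymmetric forms). But cases (c) and (d) are the heart of the lemma, and there you have a genuine gap: the step ``vanishing on $T_{29}$ upgrades the cusp order of $\mathrm{Res}_{\lambda_5}f$ from $1$ to $2$'' is never proved, and you say so yourself. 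At the level of Fourier expansions this claim is the assertion that $a(\nu_0)=0$ for $\nu_0=\frac{-5+\sqrt{29}}{2\sqrt{29}}$, the unique totally positive element of $\mathcal{O}_K^{\#}$ with $\mathrm{Tr}_{K/\mathbb{Q}}(\lambda_5\nu_0)=1$ (note $29\,N_{K/\mathbb{Q}}(\nu_0)=1$, so this is a coefficient ``near'' the diagonal, not near $T_{29}$); nothing in the forced-zero lemma, which gives vanishing along the curve $T_{29}$, obviously controls this single coefficient, and extracting it from the intersection behavior of the compactified curves $T_5$ and $T_{29}$ at the resolved cusp singularity is exactly the delicate local analysis you defer. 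Your dimension-count fallback is also not available cheaply: it needs $\dim M_k^{\pm}(\Gamma_K)$, which the paper's whole method is designed to avoid computing.

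The paper closes these two cases by a global ring-theoretic device rather than any local cusp-order estimate: it uses the Borcherds product $\phi_6$ with $\mathrm{div}\,\phi_6=3T_1+T_{29}$. For $f$ symmetric of odd weight one multiplies by $\phi_2$ (divisor $3T_1+T_4$), so that $\phi_2 f$ vanishes on $T_{29}$ and to order $\ge 3$ along the diagonal, hence is divisible by $\phi_6$ by Koecher's principle; the quotient is antisymmetric of odd weight, so your case (b) applies to it, and then $\mathrm{Res}_{\lambda_5}f=\frac{\mathrm{Res}_{\lambda_5}\phi_6}{\mathrm{Res}_{\lambda_5}\phi_2}\,e_2s_4\,P(e_2^2,s_4)=e_2e_4s_4^2\,P(e_2^2,s_4)$, which is exactly your required $s_4^2$-divisibility (and also explains the $e_4$ factor, which your cusp-order heuristic does not account for). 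The antisymmetric even-weight case runs the same way with $\phi_3$ in place of $\phi_2$, giving $e_4s_4^2\,P(e_2^2,s_4)$. So the missing idea is precisely this: convert the forced zero on $T_{29}$ into divisibility by $\phi_6$ inside $M_*(\Gamma_K)$ and read off the factor $\mathrm{Res}_{\lambda_5}\phi_6=e_4s_4^2$, rather than trying to localize the effect of the $T_{29}$-zero at the cusps of $X_0(5)$. Without some substitute for this, your proof does not go through in cases (c) and (d).
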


\begin{proof} Any form $f$ can be split into its symmetric and antisymmetric parts, and it is enough to argue for each of those parts separately. We also argue differently depending on whether the weight is even or odd. \\

 Case 1: suppose $f \in M_k^{sym}(\Gamma_K)$ is symmetric and has even weight. Then $\mathrm{Res}_{\lambda_5} f$ is an eigenform of $W_5$ with eigenvalue $+1$ and has weight divisible by $4$ and is therefore a polynomial expression $P$ in $e_2^2,s_4$. Since $$e_2^2 = \mathrm{Res}_{\lambda_5}(\mathbf{E}_2 + 4\phi_2), \; \; s_4 = \mathrm{Res}_{\lambda_5} \phi_2$$ we find $$\mathrm{Res}_{\lambda_5} f = \mathrm{Res}_{\lambda_5} P(\mathbf{E}_2 + 4\phi_2, \phi_2) = \mathrm{Res}_{\lambda_5} \tilde P(\mathbf{E}_2,\phi_2)$$ for some polynomial $\tilde P$. \\

Case 2: suppose $f \in M_k^{anti}(\Gamma_K)$ is antisymmetric and has odd weight. In particular $f$ vanishes on the diagonal and is therefore a cusp form; so $\mathrm{Res}_{\lambda_5} f$ is also a cusp form (and is therefore divisible by $s_4$) and has weight $2$ mod $4$ (and is therefore divisible by $e_2$). The remainder $\frac{\mathrm{Res}_{\lambda_5} f}{e_2 s_4}$ has eigenvalue $+1$ under the involution $W_5$ and is therefore a polynomial in $e_2^2, s_4$. Since $$e_2 s_4 = \mathrm{Res}_{\lambda_5} \phi_3$$ it follows from case 1 that $\mathrm{Res}_{\lambda_5} f$ coincides with the restriction of some polynomial in $\mathbf{E}_2,\phi_2,\phi_3$. \\

Case 3: suppose $f \in M_k^{sym}(\Gamma_K)$ is symmetric and has odd weight. In particular it has a forced zero on $T_{29}$. After multiplying by $\phi_2$, we obtain a form $\phi_2 f$ which vanishes on $T_{29}$ and vanishes to order at least three along the diagonal and is therefore divisible by $\phi_6$. The remainder $\frac{f \phi_2}{\phi_6}$ is antisymmetric of odd weight so by the argument in case $2$ we find $$\mathrm{Res}_{\lambda_5} (f \phi_2 \phi_6^{-1}) = e_2 s_4 P(e_2^2,s_4)$$ for some polynomial $P$, and therefore $$\mathrm{Res}_{\lambda_5} f = \frac{\mathrm{Res}_{\lambda_5} \phi_6}{\mathrm{Res}_{\lambda_5} \phi_2} \cdot e_2 s_4 P(e_2^2,s_4) = e_2 e_4 s_4^2 P(e_2^2,s_4).$$  Since $e_2 e_4 s_4^2 = \mathrm{Res}_{\lambda_5} \phi_7$ and $e_2^2, s_4$ were accounted for in case 1, we see that $\mathrm{Res}_{\lambda_5} f$ is the restriction of some polynomial in $\mathbf{E}_2,\phi_2,\phi_7$. \\

Case 4: suppose $f \in M_k^{anti}(\Gamma_K)$ is antisymmetric and has even weight; in this case there are forced zeros on both $T_1$ and $T_{29}$. After multiplying by $\phi_3$ we see that $\phi_3 f$ is divisible by $\phi_6$, and the remainder is antisymmetric of odd weight. By the argument in case $2$ again we find $$\mathrm{Res}_{\lambda_5} (f \phi_3 \phi_6^{-1}) = e_2 s_4 P(e_2^2,s_4)$$ for some polynomial $P$, and therefore $$\mathrm{Res}_{\lambda_5} f = \frac{\mathrm{Res}_{\lambda_5} \phi_6}{\mathrm{Res}_{\lambda_5} \phi_3} \cdot e_2 s_4 P(e_2^2,s_4) = e_4 s_4^2 P(e_2^2,s_4).$$ Since $e_4 s_4^2 = \mathrm{Res}_{\lambda_5} \phi_6$ we see that $\mathrm{Res}_{\lambda_5} f$ is the restriction of some polynomial in $\mathbf{E}_2,\phi_2,\phi_6$.
\end{proof}

\begin{prop} Every Hilbert modular form of parallel weight for $\mathbb{Q}(\sqrt{29})$ is an isobaric polynomial in the generators $$\mathbf{E}_2,\phi_2,\phi_3,\psi_3,\phi_4,\phi_5,\phi_6,\psi_6,\phi_7,\phi_8,\phi_9.$$
\end{prop}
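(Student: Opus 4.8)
The plan is to induct on the weight $k$: assume every Hilbert modular form of weight less than $k$ is a polynomial in the eleven generators (the finitely many small-weight cases being checked directly). Since each form is the sum of its symmetric and antisymmetric parts, it suffices to treat a form $f$ of one pure type. The first step is to arrange that $f$ is a cusp form. For odd $k$ this is automatic: transforming the Fourier expansion by $\begin{psmallmatrix} \varepsilon & 0 \\ 0 & \varepsilon^{-1} \end{psmallmatrix}$ and using $N_{K/\mathbb{Q}}(\varepsilon) = -1$ (valid because $p$ is a prime discriminant) gives $a(0) = (-1)^k a(0)$, hence $a(0) = 0$. For even $k$ one subtracts $a(0)\,\mathbf{E}_2^{k/2}$, whose constant term is $1$. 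So we may assume $f$ is a cusp form, and then both of its restrictions are cusp forms.

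The core of the argument is to subtract a single isobaric polynomial $P$ in the generators so that $g := f - P$ vanishes on the curve $T_5$ and vanishes to order at least $3$ along the diagonal $T_1$. The data to be matched are the restriction $\mathrm{Res}_{\lambda_5}f$, the diagonal restriction $\mathrm{Res}_{\lambda_1}f$, and the low-order diagonal Taylor jets of $f$, all of which are elliptic (cusp) forms for $\Gamma_0(5)$ or $SL_2(\mathbb{Z})$ by the discussion of Section 3.3 and Remark 3. The $T_5$-restriction is accounted for by the preceding lemma (using $\mathbf{E}_2,\phi_2,\phi_3,\phi_6,\phi_7$). The diagonal restriction and jets are accounted for by $M_*(SL_2(\mathbb{Z})) = \mathbb{C}[E_4,E_6]$ together with $\mathrm{Res}_{\lambda_1}\mathbf{E}_2 = E_4$, $\mathrm{Res}_{\lambda_1}\psi_6 = \Delta$ and $\mathrm{Res}_{\lambda_1}\phi_9 = \Delta E_6$, the identity $S_{2k}(SL_2(\mathbb{Z})) = \Delta\cdot M_{2k-12}(SL_2(\mathbb{Z}))$, and the nonzero jets of the diagonal-vanishing generators $\phi_4,\phi_5,\phi_7$. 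These contributions are intertwined --- for instance $\phi_7$ feeds both the $T_5$-restriction and the second diagonal jet --- so the match is a single surjectivity statement for the combined restriction-and-jet map, to be verified weight by weight; the two restrictions interact only through their common values at the finitely many points of $T_1 \cap T_5$, where they agree automatically.

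Granting the match, the induction closes by division. Since $g$ is symmetric or antisymmetric and $\mathrm{Res}_{\lambda_5}g = 0$, Remark 2 shows that $g$ vanishes along the whole cycle $T_5$; combined with the forced order $3$ along the diagonal, the quotient $g/\psi_3$ by the Borcherds product $\psi_3$ of divisor $3T_1 + T_5$ is a holomorphic Hilbert modular form of weight $k-3$, to which the inductive hypothesis applies. Hence $f = P + \psi_3\cdot(g/\psi_3)$ is a polynomial in the generators.

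The step I expect to be the main obstacle is exactly the weight-by-weight surjectivity: that the restrictions and leading jets of the generators really span the relevant spaces of elliptic forms at every weight, so that $P$ exists and $g/\psi_3$ is holomorphic. This is where the obstructions emphasized in the introduction can bite --- the remark there on $\mathbb{Q}(\sqrt{41})$ shows such a spanning can genuinely fail --- and it is complicated by the intertwining just mentioned and by the need to compute the true orders of vanishing of $g$ and of $\psi_3$ along the diagonal, which receive contributions from the inclusions $T_1 \subseteq T_4 \subseteq T_{16}$. Carrying this out at each weight is the computational heart of the proof, and is where the explicit Fourier expansions recorded above are used.
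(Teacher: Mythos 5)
Your blueprint --- induct on the weight, subtract a polynomial in the generators matching both the $T_5$-restriction and the low diagonal jets, then divide by $\psi_3$ with divisor $3T_1 + T_5$ --- is the paper's strategy in rough outline, but the step you yourself flag as the main obstacle is a genuine gap, and it cannot be closed the way you propose. ``Verified weight by weight'' is not a proof: there are infinitely many weights, so one needs a uniform argument, and the combined matching statement you would verify is essentially equivalent to the theorem itself (a posteriori every $f$ \emph{is} a polynomial $P$, so $f-P=0$ trivially has the required vanishing), making the plan nearly circular. The paper supplies the missing uniform argument by splitting into four cases by the parity of $k$ and the symmetry type of $f$, using two mechanisms absent from your sketch: first, the forced zeros of Lemma 4, by which odd symmetric and even antisymmetric forms automatically vanish on $T_{29}$; second, symmetry bootstraps that upgrade vanishing orders \emph{without} matching an extra jet --- e.g.\ once $g$ is antisymmetric with a double zero on $T_1$ and a zero on $T_5$, the quotient $g\phi_5/\psi_3$ is holomorphic and antisymmetric, hence vanishes on the diagonal, forcing $g$ to have a triple zero on $T_1$. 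You instead match the jets of orders $0,1,2$ directly, which requires the jet-correcting terms to preserve the already-arranged vanishing on $T_5$; your remark that the two restrictions ``interact only at the points of $T_1 \cap T_5$'' misstates the interaction, which is global: subtracting a jet-matcher that does not vanish on $T_5$ changes the $T_5$-restriction everywhere.

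That is precisely where your scheme breaks, not merely where it is unverified. For odd symmetric $f$ the second diagonal jet lies in $\Delta E_6 \cdot \mathbb{C}[E_4,\Delta]$ (weight $2k+4 \equiv 2 \bmod 4$), and the only source among isobaric expressions in the generators for the part not divisible by $\Delta^2$ is $\phi_7$ (products such as $\phi_4\phi_9$ or $\phi_5\phi_8$ do vanish on $T_5$ but contribute only jets in $\Delta^2 E_6\cdot\mathbb{C}[E_4,\Delta]$). But $\mathrm{div}\,\phi_7 = T_1 + T_4 + T_6 + T_{29}$ does not contain $T_5$: indeed $\mathrm{Res}_{\lambda_5}\phi_7 = e_2 e_4 s_4^2 \neq 0$, and since $e_4 \notin \mathbb{C}[e_2,s_4]$ this restriction cannot be cancelled by other terms. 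So after the necessary subtraction of $\phi_7 P_2(\mathbf{E}_2,\psi_6)$ the $T_5$-vanishing is lost and the division by $\psi_3$ is unavailable. The paper's resolution is to abandon $\psi_3$ in exactly this case: it uses the forced zero on $T_{29}$ and divides by $\phi_6$ (divisor $3T_1 + T_{29}$), so that $T_5$-vanishing need not be preserved; in the other three cases the jet-matchers ($\psi_6$, $\phi_5$, $\phi_4$, $\phi_8$, times polynomials in $\mathbf{E}_2,\psi_6$) all vanish on $T_5$, so the intertwining you worry about simply never occurs --- it is resolved structurally, not computationally. Two smaller points: you omit $\phi_8$ (first jet $\Delta E_6$, needed for antisymmetric forms of even weight) from your list of jet-providers, and your preliminary reduction to cusp forms is superfluous, since vanishing along $T_5$ already forces vanishing at the cusp (class number one).
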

\begin{proof} We use induction on the weight $k$ of $f$. If $k \le 0$, then $f$ is constant. \\

In general, we can assume by the previous lemma that $f$ vanishes on $T_5$. In particular, $f$ is a cusp form. As before, we split $f$ into its symmetric and antisymmetric parts and obtain four cases to distinguish. \\

Case 1: $k$ is odd and $f$ is symmetric. In particular $f$ has a forced zero on $T_{29}$. Then the restriction of $f$ to the diagonal is a cusp form of weight $2k \equiv 2 \, (4)$ and therefore has the form $$f(\tau,\tau) = \Delta(\tau) E_6(\tau) P(E_4,\Delta)$$ for some polynomial $P$; so $g := f - \phi_9 P(\mathbf{E}_2,\psi_6)$ vanishes on both $T_1$ and $T_{29}$. Actually $g$ must have at least a double zero along $T_1$ because $g \cdot \phi_4$ is divisible by $\phi_6$ and the quotient $\frac{g \phi_4}{\phi_6}$ is antisymmetric and therefore vanishes on the diagonal. Therefore, $g$ expands about the diagonal in the form $$g(\tau_1,\tau_2) = h(\tau_2) (\tau_1 - \tau_2)^2 + O((\tau_1 - \tau_2)^3),$$ where $h$ is a cusp form of weight $2k + 4 \equiv 2 \, (4)$ and therefore has the form $$h(\tau) = \Delta(\tau) E_6(\tau) P_2(E_4, \Delta)$$ for some polynomial $P_2$. On the other hand, the form $\phi_7$ has a double zero on $T_1$ and the expansion $$\phi_7(\tau_1,\tau_2) = \Delta(\tau_2) E_6(\tau_2) (\tau_1 - \tau_2)^2 + O((\tau_1 - \tau_2)^3)$$ up to a constant multiple (since $\Delta E_6$ is the unique normalized cusp form of weight $18$); for the argument here there is no loss of generality in assuming that constant is $1$. Therefore $$g - \phi_7 P_2(\mathbf{E}_2,\psi_6) = f - \phi_9 P(\mathbf{E}_2,\psi_6) - \phi_7 P_2(\mathbf{E}_2,\psi_6)$$ has a zero on $T_{29}$ and at least a triple zero on $T_1$ and is therefore divisible by $\phi_6$. The quotient $$\frac{f - \phi_9 P(\mathbf{E}_2,\psi_6) - \phi_7 P_2(\mathbf{E}_2,\psi_6)}{\phi_6}$$ has smaller weight than $f$, and the claim follows by induction. \\

Case 2: $k$ is odd and $f$ is antisymmetric. Expand $f$ about the diagonal in the form $$f(\tau_1,\tau_2) = h(\tau_2) (\tau_1 - \tau_2) + O((\tau_1 - \tau_2)^2);$$ then $h(\tau_2)$ is a cusp form of weight $2k+2 \equiv 0 \, (4)$ and therefore of the form $\Delta(\tau) P(E_4,\Delta)$ for some polynomial $P$. Since $\phi_5$ has the expansion $$\phi_5(\tau_1,\tau_2) = \Delta(\tau_2) (\tau_1 - \tau_2) + O((\tau_1 - \tau_2)^2)$$ (up to a possible constant multiple, $\Delta(\tau_2)$ is the unique cusp form of weight $12$), it follows that $$g = f - \phi_5 P(\mathbf{E}_2,\psi_6)$$ has at least a double zero along $T_1$ and still vanishes along $T_5$ (because $\phi_5$ does). Actually $g$ has at least a triple zero along $T_1$ because $g \phi_5$ is divisible by $\psi_3$ with the result being antisymmetric and therefore still vanishing along $T_1$. It follows that $g$ itself is divisble by $\psi_3$. The quotient $\frac{g}{\psi_3}$ has smaller weight than $f$, and the claim follows by induction. \\

Case 3: $k$ is even and $f$ is symmetric. The restriction $\mathrm{Res}_{\lambda_1} f$ has the form $\Delta(\tau) P(E_4,\Delta)$ for some polynomial $P$ and therefore $g = f - \psi_6 P(\mathbf{E}_2,\psi_6)$ vanishes along $T_1$ (and it still vanishes along $T_5$, because both $f$ and $\psi_6$ do). Actually it must have at least a double zero along $T_1$, because $g\phi_4$ is divisible by $\psi_3$ with the result being antisymmetric and therefore still vanishing along $T_1$. We expand $g$ along the diagonal as $$g(\tau_1,\tau_2) = h(\tau_2) (\tau_1 - \tau_2)^2 + O((\tau_1 - \tau_2)^3);$$ then $h$ is a cusp form of weight divisble by $4$ so it is $h = \Delta \cdot P_2(E_4,\Delta)$ for another polynomial $P_2$. With the expansion $$\phi_4(\tau_1,\tau_2) = \Delta(\tau_2) (\tau_1 - \tau_2)^2 + O((\tau_1 - \tau_2)^3)$$ (up to a constant multiple), it follows that $g - \phi_4 P_2(\mathbf{E}_2,\psi_6)$ has at least a triple zero along $T_1$ and continues to vanish along $T_5$ (since both $g$ and $\phi_4$ do). Therefore $g - \phi_4 P_2(\mathbf{E}_2,\psi_6)$ is divisible by $\psi_3$ with the quotient having smaller weight, and the claim follows by induction. \\

Case 4: $k$ is even and $f$ is antisymmetric (and therefore $f$ vanishes on both $T_1$ and $T_{29}$). Expand $f$ about the diagonal: $$f(\tau_1,\tau_2) = h(\tau_2) (\tau_1 - \tau_2) + O((\tau_1 - \tau_2)^2).$$ Then $h$ is a cusp form of weight $2$ mod $4$ and therefore $h = \Delta E_6 \cdot P(E_4,\Delta)$ for some polynomial $P$. Since $$\phi_8(\tau_1,\tau_2) = \Delta(\tau_2) E_6(\tau_2) (\tau_1 - \tau_2) + O((\tau_1 - \tau_2)^2)$$ (up to a constant multiple), it follows that $f - \phi_8 P(\mathbf{E}_2,\psi_6)$ has at least a double zero along $T_1$ (and continues to vanish on $T_5$, because $\phi_8$ does). As in the previous cases it follows that $f - \phi_8 P(\mathbf{E}_2,\psi_6)$ actually has at least a triple zero along $T_1$. It is therefore divisible by $\psi_3$ with the result having smaller weight, and the claim follows by induction.

\end{proof}

\begin{prop} The graded ring $M_*(\Gamma_{\mathbb{Q}(\sqrt{29})})$ is defined by the 11 generators $$\mathbf{E}_2,\phi_2,\phi_3,\psi_3,\phi_4,\phi_5,\phi_6,\psi_6,\phi_7,\phi_8,\phi_9$$ in weights $2,2,3,3,4,5,6,6,7,8,9$ and by 35 relations in weights 6 through 18.
\end{prop}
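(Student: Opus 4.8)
The plan is to combine the generation statement just proved with a Hilbert--series comparison. Since $\mathbf{E}_2,\phi_2,\phi_3,\psi_3,\phi_4,\phi_5,\phi_6,\psi_6,\phi_7,\phi_8,\phi_9$ generate $M_*(\Gamma_K)$, there is a graded surjection $\pi: R \twoheadrightarrow M_*(\Gamma_K)$, where $R = \mathbb{C}[X_2^{(1)},X_2^{(2)},X_3^{(1)},X_3^{(2)},X_4,X_5,X_6^{(1)},X_6^{(2)},X_7,X_8,X_9]$ is the weighted polynomial ring on eleven variables of weights $2,2,3,3,4,5,6,6,7,8,9$ and $\pi$ sends the variables to $\mathbf{E}_2,\phi_2,\phi_3,\psi_3,\phi_4,\phi_5,\phi_6,\psi_6,\phi_7,\phi_8,\phi_9$ respectively. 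The content of the proposition is the determination of the kernel ideal $\mathcal{I}_{29} = \ker \pi$: I must produce an explicit finite set of relations and then prove that it generates $\mathcal{I}_{29}$, with minimal cardinality $35$ and weights in the range $6$--$18$.

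First I would pin down the Hilbert series $H(t) = \sum_{k \geq 0} \dim_{\mathbb{C}} M_k(\Gamma_K)\, t^k$ independently of any presentation. The cleanest input is the dimension formula for spaces of Hilbert modular forms of parallel weight for $\mathbb{Q}(\sqrt{29})$, whose terms (the volume term proportional to $\zeta_K(-1)$, the contributions of the elliptic fixed points, and the cusp contribution) are all explicit for this field; alternatively, the reduction algorithm in the proof of the previous proposition computes each $\dim M_k(\Gamma_K)$ directly from dimensions of spaces of elliptic modular and cusp forms. By section 2.2 we may fix a homogeneous system of parameters among the generators and write $H(t) = \frac{p(t)}{(1 - t^{k_1})(1 - t^{k_2})(1 - t^{k_3})}$, so that only finitely many dimensions are needed to determine $H(t)$ exactly.

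Next I would construct the relations explicitly. For each weight $k$ the map $\pi_k: R_k \to M_k(\Gamma_K)$ is surjective, so the space of relations in weight $k$ has dimension $\dim R_k - \dim M_k(\Gamma_K)$, where $\dim R_k$ is a count of weighted monomials. To exhibit the relations concretely I would evaluate every weight-$k$ monomial in the generators through the restriction maps $\mathrm{Res}_{\lambda_1}$ and $\mathrm{Res}_{\lambda_5}$ of section 3.3 --- which land in the explicitly known rings $M_*(SL_2(\mathbb{Z}))$ and $M_*(\Gamma_0(5))$ --- together with the leading Taylor coefficients along the diagonal $T_1$ used in the previous proof. A linear combination of monomials lies in $\mathcal{I}_{29}$ precisely when this restriction data vanishes, and the forced-zero structure of the lemma in section 2.4 and the order-of-vanishing arguments of the previous proposition guarantee that this finite package of restriction data is faithful enough to certify that a candidate combination really is the zero form. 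Computing these kernels in weights $6$ through $18$ and discarding in each weight the relations already generated by lower-weight ones yields the claimed minimal system of $35$ relations.

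Finally I would prove that these $35$ relations generate $\mathcal{I}_{29}$. Let $J \subseteq \mathcal{I}_{29}$ be the ideal they span. By construction there is a graded surjection $R/J \twoheadrightarrow R/\mathcal{I}_{29} \cong M_*(\Gamma_K)$, so it suffices to check that $R/J$ and $M_*(\Gamma_K)$ have the same Hilbert series: a surjection of graded vector spaces with equal finite dimensions in every degree is an isomorphism, forcing $J = \mathcal{I}_{29}$. The Hilbert series of $R/J$ is computed from the explicit relations by a Gr\"obner basis calculation in Macaulay2, and I would verify that it equals the $H(t)$ of the second step; minimality of the $35$ generators and the weight range $6$--$18$ are then read off from the graded Betti numbers of the resulting resolution. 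The main obstacle is the second step: making the dimension count $\dim M_k(\Gamma_K)$ rigorous and complete across all weights --- equivalently, certifying the exact numerator $p(t)$ --- since the entire Hilbert-series comparison, and with it the claim that exactly $35$ relations suffice, rests on $H(t)$ being correct.
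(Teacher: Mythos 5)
Your overall architecture is legitimate in principle --- generation is already proved, so exhibiting an ideal $J \subseteq \mathcal{I}_{29}$ of candidate relations and showing $R/J$ and $M_*(\Gamma_K)$ have equal Hilbert series would force $J = \mathcal{I}_{29}$ --- but the step where you certify that candidate relations are genuine relations is broken, and it is precisely where the paper introduces its key new device. You propose to detect whether a weight-$k$ combination of monomials in the generators is the zero form from its restrictions to $T_1$ and $T_5$ plus the low-order diagonal Taylor coefficients used in the generation proof. That data is not faithful, exactly because the generators were engineered to vanish along those curves: for example $\psi_3^2\left(\psi_3^2 - c\,\phi_6\right)$ in weight $12$ has vanishing $\mathrm{Res}_{\lambda_1}$, vanishing $\mathrm{Res}_{\lambda_5}$, and vanishing first and second Taylor coefficients along the diagonal for \emph{every} $c \in \mathbb{C}$, yet is nonzero for every $c$ since $\mathrm{div}\,\psi_3^2 \ne \mathrm{div}\,\phi_6$. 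Worse, $\psi_3^6$ has weight $18$ and vanishes to order $18$ along $T_1$, so no diagonal Taylor data of order less than $18$ can separate all weight-$\le 18$ candidates; and computing such high-order Taylor coefficients of Borcherds products would require their explicit Fourier expansions, which is exactly what this method is designed to avoid. Your computed ``kernels'' would therefore strictly contain the true relation spaces, so $J \not\subseteq \mathcal{I}_{29}$, and the concluding Hilbert-series comparison could not be salvaged from within your setup.

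The paper's fix is to restrict to curves on which \emph{no} generator vanishes: the obstruction principle produces an input form with principal part $2q^{-35} + 2q^{-23}$ lifting to a Borcherds product $\phi_{24}$ of weight $24$ with divisor exactly $T_{23} + T_{35}$, where $\lambda_{23} = \frac{11+\sqrt{29}}{2}$ and $\lambda_{35} = \frac{13+\sqrt{29}}{2}$ lie in a common Weyl chamber. Any nonzero form vanishing on both $T_{23}$ and $T_{35}$ is divisible by $\phi_{24}$, hence by Koecher's principle has weight at least $24$; so the pair of restrictions to $T_{23}$ and $T_{35}$ is faithful in every weight below $24$, and all relations through weight $18$ (indeed $24$) are certified by linear algebra in $M_*(\Gamma_0(23))$ and $M_*(\Gamma_0(35))$. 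The second divergence is completeness: you need the true Hilbert series $H(t)$ from an independent source (you flag this yourself as the main obstacle), whereas the paper never computes $\dim M_k$ a priori. Instead it observes that $\mathbf{E}_2, \phi_3, \psi_3$ are algebraically independent (the restrictions of $\mathbf{E}_2, \phi_3$ to $T_5$ already are, and $\psi_3$ vanishes there) and that the $35$ relations found through weight $18$ suffice to eliminate the remaining eight variables, so any further relation reduces to a polynomial relation among an algebraically independent triple and hence lies in the ideal already generated; the Hilbert series is then read off the finished presentation rather than fed in as input.
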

\begin{proof} The strategy we used to compute relations is as follows. There are totally positive elements $$\lambda_{23} = \frac{11 + \sqrt{29}}{2}, \; \; \lambda_{35} = \frac{13 + \sqrt{29}}{2}$$ of norms $23$ and $35$, respectively, which lie in the same Weyl chamber with respect to all $10$ of the Borcherds products considered above. Moreover the obstruction principle shows that there is a nearly-holomorphic modular form whose Fourier expansion begins $2q^{-35} + 2q^{-23} + 48 + ...$ and which lifts to a Borcherds product $\phi_{24}$ of weight $24$ with simple zeros along $T_{23}$ and $T_{35}$. Any nonzero Hilbert modular form $f$ whose restrictions to $T_{23}$ and $T_{35}$ both vanish is divisible by $\phi_{24}$; the quotient $\frac{f}{\phi_{24}}$ is also holomorphic in the cusps by the G\"otzky-Koecher principle and therefore has nonnegative weight so $f$ must have had weight at least $24$. \\

In this way we are able to compute all relations up to weight $24$ by computing relations only among the restrictions to $T_{23}$ and $T_{35}$ which is somewhat easier. The exact relations we found are tedious to write out and are left to the ancillary material.  We mention here that $\{\mathbf{E}_2,\phi_3,\psi_3\}$ is algebraically independent (because the restrictions of $\mathbf{E}_2$ and $\phi_3$ to $T_5$ are already algebraically independent, and $\psi_3$ vanishes there) and that any relation can be reduced against the relations we found up to weight $18$ to eliminate the variables other than $\mathbf{E}_2,\phi_3,\psi_3$. It follows that the 35 relations we found up to weight $18$ generate the entire ideal. The Hilbert series is \[ \sum_{k=0}^{\infty} \mathrm{dim}\, M_k(\Gamma_{\mathbb{Q}(\sqrt{29})}) t^k = \frac{(1 + t + t^2)(1 - t + t^2 + t^4 + t^6 - t^7 + t^8)}{(1 - t^2)(1 - t^3)^2}. \qedhere \]

\end{proof}

\section{The graded ring of Hilbert modular forms for $\mathbb{Q}(\sqrt{37})$}

Let $K = \mathbb{Q}(\sqrt{37})$ with ring of integers $\mathcal{O}_K = \mathbb{Z}[\omega]$, $\omega = \frac{1 + \sqrt{37}}{2}$. Since $2$ is inert and $3$ splits in $\mathcal{O}_K$, theorem 1 of \cite{BS} implies that the full character group of $SL_2(\mathcal{O}_K)$ is generated by two characters $\chi_1, \chi_{\omega}$ which are characterized by their effect on the translations $T_1,T_{\omega}$ by $1$ and $\omega$: $$\chi_1(T_1) = e^{2\pi i / 3}, \; \chi_1(T_{\omega}) = 1$$ and $$\chi_{\omega}(T_1) = 1, \; \chi_{\omega}(T_{\omega}) = e^{2\pi i / 3}.$$ The condition for a character $\chi$ being symmetric is $\chi(T_{\omega}) = \chi(T_{\omega'}) = \chi(T_1) \chi(T_{\omega}^{-1})$, i.e. $\chi(T_1) = \chi(T_{\omega})^2$. In particular the group of symmetric characters of $SL_2(\mathcal{O}_K)$ is cyclic of order three and generated by $\chi = \chi_1 \chi_{\omega}^{-1}$. \\

We compute input functions using the obstruction principle as in the last section; there is again a single cusp form that acts as the obstruction to extending arbitrary principal parts to nearly-holomorphic modular forms and it is $$q - q^3 - 2q^4 + 3q^7 - 2q^9 + 4q^{10} - 3q^{11} + 2q^{12} \pm ... \in S_2(\Gamma_0(37),\chi).$$ Therefore there exist nearly-holomorphic modular forms $F_1,G_1,F_2,G_2,F_4,G_7$ of weight $0$ and level $\Gamma_0(37)$ and Nebentypus $\chi(d) = \left(\frac{37}{d}\right)$ whose Fourier expansions begin as follows:

\begin{align*} F_1(\tau) &= 2q^{-3} + 2q^{-1} + 2 + 2q + 4q^3 + 2q^7 +0q^9 - 4q^{10} + 2q^{11} +4q^{12} \pm ... \\ G_1(\tau) &= 2q^{-4} + 4q^{-1} + 2 - 2q + 2q^3 - 2q^4 + 6q^7 + 8q^9 + 2q^{10} + 2q^{11} - 4q^{12} \pm ... \\ F_2(\tau) &= 2q^{-10} + 2q^{-4} + 2q^{-3} - 2q^{-1} + 4 - 6q - 8q^3 + 8q^4 + 18q^7 - 4q^9 + 6q^{10} + 34q^{11} + 30q^{12} \pm ... \\ G_2(\tau) &= 2q^{-10} + 4q^{-4} + 4 - 10q - 10q^3 + 6q^4 + 22q^7 + 4q^9 + 12q^{10} + 34q^{11} + 22q^{12} \pm ... \\ F_4(\tau) &= q^{-37} + 2q^{-1} + 8 + 24q + 42q^3 + 80q^4 + 170q^7 + 300q^9 + 416q^{10} + 504q^{11} + 664q^{12} + ... \\ G_7(\tau) &= q^{-37} + 2q^{-12} + 2q^{-3} + 14 + 28q + 60q^3 + 68q^4 + 150q^7 + 308q^9 + 450q^{10} + 520q^{11} + 700q^{12} + ... \end{align*}

The Borcherds lifts $\phi_i = \Psi_{F_i}$, $\psi_i = \Psi_{G_i}$ have weight $i$ and their divisors can be read off the principal parts of the input functions: $$\mathrm{div} \, \phi_1 = T_1 + T_3, \; \; \mathrm{div} \, \psi_1 = 2T_1 + T_4, \; \mathrm{div} \, \phi_2 = -T_1 + T_3 + T_4 + T_{10},$$ $$\mathrm{div} \, \psi_2 = 2T_4 + T_{10}, \; \mathrm{div} \, \phi_4 = T_1 + T_{37}, \; \mathrm{div} \, \psi_7 = T_3 + T_{12} + T_{37}.$$ In particular $\phi_1,\psi_1,\psi_2,\phi_4$ vanish on the diagonal and $\phi_2,\psi_7$ do not. Moreover $\phi_1,\psi_1,\phi_4$ are antisymmetric and $\phi_2,\psi_2,\psi_7$ are symmetric, and all of the products above have a nontrivial character of order $3$; the notation is chosen such that $\phi_1,\phi_2,\phi_4$ have the same character $\chi$ and that $\psi_1,\psi_2,\psi_7$ have the same character $\chi^{-1}$. All of this can be proved directly, but it is easier to read it off of the restrictions to the diagonal and to the curve $T_3$ which are worked out in the next paragraph. Note that by construction $G_2 = F_2 + G_1 - F_1$ and therefore $\phi_1 \psi_2 = \psi_1 \phi_2$. \\

We fix the totally positive integers $\lambda_1 = 1, \; \lambda_3 = \frac{7 + \sqrt{37}}{2}$. Then one can compute the restrictions $$\mathrm{Res}_{\lambda_1} \phi_2(\tau) = \eta(\tau)^8, \; \; \mathrm{Res}_{\lambda_1} \psi_7(\tau) = E_6(\tau) \eta(\tau)^{16}, \; \; \mathrm{Res}_{\lambda_1} \mathbf{E}_2(\tau) = E_4(\tau)$$ and \begin{align*} \mathrm{Res}_{\lambda_3} \psi_1(\tau) &= \eta(\tau)^2 \eta(3\tau)^2 = q^{1/3} (1 - 2q - q^2 + 5q^4 + 4q^5 - 7q^6 \pm ...) \\ \mathrm{Res}_{\lambda_3} \psi_2(\tau) &= \eta(\tau)^2 \eta(3\tau)^2 e_2(\tau) = q^{1/3} (1 + 10q + 11q^2 - 72q^3 + 29q^4 - 44q^5 + 29q^6 \pm ...) \\ \mathrm{Res}_{\lambda_3} \phi_4(\tau) &= \eta(\tau)^4 \eta(3 \tau)^4 e_4(\tau) = q^{2/3} (1 - 34q - 148q^2 + 454q^3 - 559q^4 + 2418q^5 + 680q^6 \pm ...) \\ \mathrm{Res}_{\lambda_3} \mathbf{E}_2(\tau) &= e_2(\tau)^2 = 1 + 24q + 216q^2 + 888q^3 + 1752q^4 + 3024q^5 + 7992q^6 \pm ...\end{align*} with the omitted restrictions above being zero. Here we use the notation $$e_2(\tau) = \frac{3 E_2(3\tau) - E_2(\tau)}{2} = 1 + 12q + 36q^2 + ..., \; \; e_4(\tau) = \frac{9E_4(3\tau) - E_4(\tau)}{8} = 1 - 30q - 270q^2 -...$$ Using these computations we can prove:

\begin{lem} Let $f \in M_*(\Gamma_K)$ be a Hilbert modular form of parallel weight and trivial character. Then there is a polynomial $P$ such that $f - P(\psi_1,\psi_2,\phi_4,\mathbf{E}_2)$ vanishes on the Hirzebruch-Zagier divisor $T_3$. If $f$ is symmetric or antisymmetric, then one can also choose $P(\psi_1,\psi_2,\phi_4,\mathbf{E}_2)$ to be symmetric or antisymmetric, respectively.
\end{lem}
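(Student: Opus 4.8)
The plan is to reduce everything to a computation inside $M_*(\Gamma_0(3))$. Writing $f = f^+ + f^-$ for the symmetric and antisymmetric parts, and recalling from \S2.3 that $\mathrm{Res}_{\lambda_3}$ of a symmetric (resp. antisymmetric) form is a $W_3$-eigenform with eigenvalue $+1$ (resp. $-1$), it suffices to treat the two parts separately; this simultaneously gives the refinement. Here $\psi_1,\phi_4$ are antisymmetric and $\psi_2,\mathbf{E}_2$ symmetric, so a monomial $\psi_1^a\psi_2^b\phi_4^c\mathbf{E}_2^d$ is symmetric exactly when $a+c$ is even and has trivial character exactly when $a+b\equiv c\pmod 3$; only trivial-character monomials are allowed in $P$.

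First I would record the structure of $M_*(\Gamma_0(3))$. It is generated by $e_2,e_4$ and the cusp form $\delta := \eta(\tau)^6\eta(3\tau)^6 = (\mathrm{Res}_{\lambda_3}\psi_1)^3$, and the single relation in weight $8$ (expressing $e_4^2$ through $e_2^4$ and $e_2\delta$) lets me assume every monomial has $e_4$-exponent at most $1$. A short computation with the transformations of $E_2,E_4,\eta$ shows $W_3 g = -g$ for each of $e_2,e_4,\delta$, so a reduced monomial $e_2^i e_4^j\delta^l$ with $j\in\{0,1\}$ has eigenvalue $(-1)^{i+j+l}$ and weight $\equiv 2(i+l)\pmod 4$. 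Using the dictionary $\mathrm{Res}_{\lambda_3}\mathbf{E}_2 = e_2^2$, $\mathrm{Res}_{\lambda_3}\psi_1 = s$, $\mathrm{Res}_{\lambda_3}\psi_2 = se_2$, $\mathrm{Res}_{\lambda_3}\phi_4 = s^2 e_4$ with $s = \eta(\tau)^2\eta(3\tau)^2$ and $s^3=\delta$, a trivial-character monomial restricts to $\delta^{(a+b+2c)/3} e_2^{b+2d} e_4^c$. Solving for admissible exponents shows that the restriction image $R$ consists, in the reduced basis, of exactly the $e_2^i\delta^l$ with $i$ even (any $l$) or $i$ odd and $l\ge1$, together with the $e_2^i e_4\delta^l$ with $l\ge1$; the only reduced monomials missing from $R$ are $e_2^i$ ($i$ odd) and $e_2^i e_4$.

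Now I would run the four cases set by the symmetry of $f$ and the parity of its weight $k$, using that $\mathrm{Res}_{\lambda_3}f$ has weight $2k\equiv 0$ or $2\pmod 4$. If $f$ is symmetric of even weight, then $\mathrm{Res}_{\lambda_3}f$ has eigenvalue $+1$ and weight $\equiv 0$, forcing it into the span of the $e_2^i\delta^l$ with $i+l$ even; all of these lie in $R$ and are hit by symmetric monomials ($e_2^2=\mathrm{Res}_{\lambda_3}\mathbf{E}_2$, $e_2\delta=\mathrm{Res}_{\lambda_3}(\psi_1^2\psi_2)$, and so on). In the other three cases I claim $\mathrm{Res}_{\lambda_3}f$ is a cusp form, hence divisible by $\delta$: since $\delta$ has simple zeros at the two cusps one has $S_*(\Gamma_0(3))=\delta\,M_{*-6}(\Gamma_0(3))$, so every reduced term then carries $l\ge1$. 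An antisymmetric $f$ vanishes on the diagonal by Lemma (i) and so is a cusp form, while an odd-weight $f$ satisfies $a(0)=(-1)^k a(0)=0$ upon applying $\mathrm{diag}(\varepsilon,\varepsilon^{-1})\in\Gamma_K$ with $N(\varepsilon)=-1$. Given $l\ge1$, the eigenvalue/weight bookkeeping places $\mathrm{Res}_{\lambda_3}f$ among the $e_2^i e_4\delta^{l}$ with $l\ge1$ (the symmetric odd-weight and antisymmetric even-weight cases, distinguished by the parity of $i+l$) or the $e_2^i\delta^{l}$ with $l\ge1$ (antisymmetric odd weight), and each such monomial lies in $R$ and is realized by a product of the correct symmetry.

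The main obstacle is exactly the two families $e_2^i$ ($i$ odd) and $e_2^i e_4$ that are \emph{not} restrictions of the available generators: a priori $\mathrm{Res}_{\lambda_3}f$ could carry a nonzero $e_2 e_4$- or $e_2^3$-component, so the argument cannot rest on eigenvalue and weight alone. It succeeds only because in every case where such an offending $l=0$ term is compatible with the eigenvalue and weight, $f$ is forced to be a cusp form and the term is killed. Thus the real content is the cusp-form input — the diagonal vanishing of antisymmetric forms and the vanishing $a(0)=0$ in odd weight coming from the negative norm of the fundamental unit — together with the weight-$8$ relation used to bound the $e_4$-exponent; the exponent arithmetic identifying $R$ and the verification that each surviving monomial is an explicit (anti)symmetric restriction are routine and I would not belabor them.
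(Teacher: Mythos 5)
Your proof is correct and follows essentially the same route as the paper's: restrict to $T_3$, use the presentation of $M_*(\Gamma_0(3))$ by $e_2,e_4,s_6=\eta(\tau)^6\eta(3\tau)^6$ with the weight-$8$ relation, do $W_3$-eigenvalue and weight-mod-$4$ bookkeeping on the symmetric and antisymmetric parts, and use divisibility of cuspidal restrictions by $s_6$ to kill the terms ($e_2^i$ with $i$ odd, and $e_2^i e_4$ with no $s_6$-factor) not realized by the generators. The differences are purely organizational — you enumerate the restriction image of trivial-character monomials explicitly in four cases where the paper argues in three, and you spell out the negative-norm-unit argument for why odd-weight forms are cusp forms, which the paper asserts without proof — so this is the same argument.
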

\begin{proof} The ring of modular forms of level $\Gamma_0(3)$ and trivial character is generated by the forms \begin{align*} e_2(\tau) &= \frac{3 E_2(3\tau) - E_2(\tau)}{2} = 1 + 12q + 36q^2 + 12q^3 + ... \\ e_4(\tau) &= \frac{9 E_4(3\tau) - E_4(\tau)}{8} = 1 - 30q - 270q^2 - 570q^3 - ... \\ s_6(\tau) &= \eta(\tau)^6 \eta(3\tau)^6 = q - 6q^2 + 9q^3 + 4q^4 \pm ... \end{align*} where $e_2$ has weight $2$, $e_4$ has weight $4$, $s_6$ is a cusp form of weight $6$, together with a single relation $$e_4^2 = e_2^4 - 108 e_2 s_6$$ in weight $8$. All three generators are eigenforms of the Atkin-Lehner operator $W_3$ with eigenvalue $-1$. We split $f$ into its symmetric and antisymmetric parts and argue for each part separately. \\

Case 1: suppose $f \in M_k^{sym}(\Gamma_K)$ is symmetric and has even weight. Then $\mathrm{Res}_{\lambda_3}f$ is an eigenform of $W_3$ with eigenvalue $+1$ and has weight divisible by $4$; so it is a polynomial expression in $e_2^2, e_2 s_6, s_6^2$. Since $$e_2^2 = \mathrm{Res}_{\lambda_3} \mathbf{E}_2, \; e_2 s_6 = \mathrm{Res}_{\lambda_3}(\psi_1^2 \psi_2), \; s_6^2 = \mathrm{Res}_{\lambda_3} \psi_1^6,$$ we can find a polynomial in $\psi_1^6,\psi_1^2\psi_2,\mathbf{E}_2$ whose restriction to $T_3$ equals $\mathrm{Res}_{\lambda_3} f$. This will also be symmetric because $\psi_1^2, \psi_2, \mathbf{E}_2$ are symmetric. \\

Case 2: suppose $f \in M_k^{sym}(\Gamma_K)$ is symmetric and has odd weight. Then $f$ is a cusp form and therefore $\mathrm{Res}_{\lambda_3} f$ is also a cusp form; in particular it is divisble by $s_6$, and $s_6^{-1} \mathrm{Res}_{\lambda_3}$ has eigenvalue $-1$ under $W_3$. Therefore all monomials in $e_2,e_4,s_6$ that occur in $s_6^{-1} \mathrm{Res}_{\lambda_3} f$ must contain $e_4$ (as all expressions involving only $e_2,s_6$ in weights $0$ mod $4$ will have eigenvalue $+1$) so $\mathrm{Res}_{\lambda_3} f$ is $e_4 s_6$ multiplied by some polynomial in $e_2^2, e_2 s_6, s_6^2$. The claim follows from the previous case together with $$e_4 s_6 = \mathrm{Res}_{\lambda_3}(\phi_4 \psi_1),$$ and the resulting polynomial expression will again be symmetric because $\phi_4 \psi_1$ is symmetric. \\

Case 3: suppose $f \in M_k^{anti}(\Gamma_K)$ is antisymmetric. In particular $f$ vanishes along the diagonal and therefore also at the cusps; so $\mathrm{Res}_{\lambda_3} f$ is a cusp form that is an eigenform of $W_3$ with eigenvalue $-1$. After dividing by $s_6$, we reduce to case $1$; since $s_6 = \mathrm{Res}_{\lambda_3} \psi_1^3$ and $\psi_1^3$ is antisymmetric, the resulting polynomial expression is antisymmetric.
\end{proof}

This quickly implies the same result for all of the symmetric characters:

\begin{lem} Let $f \in M_*(\Gamma_K,\chi)$ be a Hilbert modular form of any weight with symmetric character $\chi$. Then there is a polynomial $P$ as above for which $f - P(\psi_1,\psi_2,\phi_4,\mathbf{E}_2)$ vanishes on the divisor $T_3$.
\end{lem}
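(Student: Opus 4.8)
The plan is to reduce the claim for a character $\chi^{j}$ with $j\in\{1,2\}$ to the trivial-character case just established, exploiting that $\psi_1$ has character $\chi^{-1}$ and, since $\mathrm{div}\,\psi_1 = 2T_1 + T_4$ does not contain $T_3$, restricts to the nonvanishing form $\mathrm{Res}_{\lambda_3}\psi_1 = \eta(\tau)^2\eta(3\tau)^2 =: s_2$. For $j=0$ there is nothing to prove. Since $\chi$ is symmetric, the symmetric and antisymmetric parts of $f$ share the character $\chi^{j}$, and by Remark 6 it is enough to treat each part separately; so I assume $f$ is symmetric or antisymmetric, in which case vanishing on $T_3$ is equivalent to vanishing of $\mathrm{Res}_{\lambda_3}$.

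First I would form $\tilde f = f\,\psi_1^{\,j}$, a holomorphic Hilbert modular form of trivial character since $\psi_1^{\,j}$ has character $\chi^{-j}$. The previous lemma supplies a polynomial $P$ in $\psi_1,\psi_2,\phi_4,\mathbf{E}_2$ with $\tilde f - P$ vanishing on $T_3$. Restricting to $T_3$ and using that $s_2$ is not a zero-divisor in the (integral domain) ring of modular forms for $\Gamma_0(3)$, I obtain
\[
\mathrm{Res}_{\lambda_3} f \cdot s_2^{\,j} = \mathrm{Res}_{\lambda_3} P, \qquad\text{so}\qquad \mathrm{Res}_{\lambda_3} f = \frac{\mathrm{Res}_{\lambda_3} P}{s_2^{\,j}},
\]
which is a genuine modular form for $\Gamma_0(3)$ carrying an order-three character $\kappa^{-j}$, where $\kappa$ denotes the character of $\Gamma_0(3)$ with $s_2$ in its weight-$2$, $\kappa$-isotypic piece.

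It remains to realize $\mathrm{Res}_{\lambda_3}P/s_2^{\,j}$ as the restriction of a polynomial $Q$ in $\psi_1,\psi_2,\phi_4,\mathbf{E}_2$ of character $\chi^{j}$; then $f-Q$ vanishes on $T_3$. Here I would use the explicit restrictions $\psi_1\mapsto s_2$, $\psi_2\mapsto s_2 e_2$, $\phi_4\mapsto s_2^2 e_4$, $\mathbf{E}_2\mapsto e_2^2$, so that every generator except $\mathbf{E}_2$ contributes a power of $s_2$, and a monomial of character $\chi^{j}$ restricts to an $s_2$-power congruent to $-j$ modulo $3$ (lying in the $\kappa^{-j}$-isotypic component). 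Consequently the restrictions of the character-$\chi^{j}$ monomials land in, and I claim exhaust, the $\kappa^{-j}$-isotypic part of the graded ring of $\Gamma_0(3)$ modular forms with character, namely $\mathbb{C}[e_2,e_4,s_2]/(e_4^2 - e_2^4 + 108\,e_2 s_2^3)$ with $s_2$ carrying $\kappa$ and $s_2^3 = \eta(\tau)^6\eta(3\tau)^6$. Since $\mathrm{Res}_{\lambda_3}f$ lies in exactly this component, the matching is carried out as in the cases of the previous lemma: peel off the forced factor $s_2^{\,(-j \bmod 3)}$, then decompose the remaining trivial-character form according to its $W_3$-eigenvalue ($+1$ or $-1$ as $f$ is symmetric or antisymmetric, by section 2.3) and its weight modulo $4$.

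The main obstacle is this last realizability step. The subtlety is that $\mathrm{Res}_{\lambda_3}P$ need not be divisible by $s_2^{\,j}$ monomial-by-monomial — only the full sum is divisible — so one cannot formally ``cancel'' powers of $\psi_1$; one must genuinely show that the character-$\chi^{j}$ monomials span the relevant graded pieces of the $\kappa^{-j}$-isotypic component. As in the trivial-character lemma this amounts to exhibiting, for each admissible pairing of $W_3$-eigenvalue and weight class, an explicit monomial (for instance in the character-$\chi$ case $\psi_1^2$, $\psi_1\psi_2$, $\phi_4$ restrict to $s_2^2$, $s_2^2 e_2$, $s_2^2 e_4$, which together with $\mathbf{E}_2\mapsto e_2^2$ generate the needed forms); the remaining bookkeeping is routine once the graded ring of $\Gamma_0(3)$ forms with character is made explicit.
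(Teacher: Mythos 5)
Your proposal is correct and, once the scaffolding is stripped away, lands on the paper's own argument; but your opening detour is logically superfluous, and it is worth seeing why. Multiplying $f$ by $\psi_1^{\,j}$, invoking the trivial-character lemma to produce $P$, and then dividing the restriction by $s_2^{\,j}$ only re-establishes that $\mathrm{Res}_{\lambda_3}f$ is a holomorphic $\Gamma_0(3)$-form in the appropriate character-isotypic piece --- which is immediate, since $f$ itself is holomorphic; neither $\tilde f$ nor $P$ contributes anything. The entire content, which you correctly isolate as the ``main obstacle,'' is the realizability step, and here the paper is both shorter and more self-contained than your appeal to the (asserted, not proved) presentation $\mathbb{C}[e_2,e_4,s_2]/(e_4^2-e_2^4+108\,e_2 s_2^3)$ of the character-graded ring: the paper observes that any $\Gamma_0(3)$-form transforming under the character of $s_2$ or of $s_2^2$ has nonintegral, hence positive, order at every cusp, so it is automatically a cusp form and is divisible by $s_2$ or $s_2^2$ respectively (using that $s_2=\eta(\tau)^2\eta(3\tau)^2$ is nonvanishing on $\mathbb{H}$ with minimal cusp orders); the quotient then has trivial character and is handled verbatim by the previous lemma's $W_3$-eigenvalue/weight-mod-$4$ case analysis, with $\mathrm{Res}_{\lambda_3}\psi_1=s_2$ restoring the peeled factor on the Hilbert side --- exactly your monomial bookkeeping with $\psi_1^2$, $\psi_1\psi_2$, $\phi_4$. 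Two details you should make explicit if you write this up: peeling $s_2^{\,j'}$ flips the $W_3$-eigenvalue of the quotient by $(-1)^{j'}$ (since $W_3 s_2=-s_2$), and the restriction of a symmetric or antisymmetric form with an order-three character is still a $W_3$-eigenform with eigenvalue $\pm1$ because $\chi(S)=1$ for any such character ($S$ has order $4$, the character order $3$). Also, your word ``exhaust'' is too strong: the character-$\chi^{j}$ monomials do not exhaust the $\kappa^{-j}$-isotypic component of $M_*(\Gamma_0(3))$ (for instance $s_2^2 e_2 e_4$ is not a restriction of any character-$\chi$ polynomial in the four generators); what the case analysis actually shows, and all that is needed, is that they cover the constrained subspace of cusp forms with the eigenvalue/weight combinations that restrictions of Hilbert modular forms can occupy.
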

\begin{proof} The nontrivial symmetric characters restrict on $T_3$ to the characters of $s_2(\tau) = \eta(\tau)^2 \eta(3\tau)^2$ and its square. Note that any modular form of level $\Gamma_0(3)$ that transforms under one of those characters is a cusp form (because the order at all cusps is nonintegral and in particular nonzero) and is therefore divisible by $s_2$ or $s_2^2$, respectively. Since $\mathrm{Res}_{\lambda_3} \psi_1 = s_2$, we can use the previous argument.
\end{proof}

By further restricting to the diagonal we can compute generators:

\begin{prop} Every Hilbert modular form $f \in M_*(\Gamma_K,\chi)$ for a symmetric character $\chi$ is an isobaric polynomial in $\phi_1,\psi_1,\phi_2,\psi_2,\phi_4,\psi_7,\mathbf{E}_2$.
\end{prop}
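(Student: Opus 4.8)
The plan is to induct on the weight $k$ and to peel off the Borcherds product $\phi_1$, whose divisor is exactly $T_1 + T_3$, so that each step drops the weight by one. Given $f \in M_k(\Gamma_K,\chi)$ with $\chi$ a symmetric character, I first split $f = f^+ + f^-$ into its symmetric and antisymmetric parts, both of character $\chi$, and treat them separately. Applying the preceding lemma to each part, I subtract an isobaric polynomial in $\psi_1,\psi_2,\phi_4,\mathbf{E}_2$ (chosen symmetric, resp. antisymmetric) so that the remainder vanishes on $T_3$. Because $\lambda_3 = \frac{7+\sqrt{37}}{2}$ is totally positive, the closure of $T_3$ passes through the unique cusp, so once a form vanishes on $T_3$ it is automatically a cusp form; this will let me invoke the valence formula for the elliptic restrictions below.

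The antisymmetric part is then immediate. By the forced-zero lemma every antisymmetric form already vanishes on the diagonal $T_1$, so after the subtraction $f^- - P$ vanishes on both $T_1$ and $T_3$ and is divisible by $\phi_1$. The quotient $(f^- - P)/\phi_1$ is a holomorphic Hilbert modular form (holomorphic at the cusp by Koecher's principle) of weight $k-1$ and again of symmetric character, to which the induction hypothesis applies; hence $f^- = P + \phi_1 \cdot (\text{polynomial})$.

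For the symmetric part the work lies in forcing vanishing on $T_1$ too. After arranging $f^+$ to vanish on $T_3$, I restrict it to the diagonal: $\mathrm{Res}_{\lambda_1} f^+$ is a cusp form in $M_{2k}(SL_2(\mathbb{Z}),\epsilon^j)$, where $\epsilon$ is the order-three character obtained by restricting $\chi$ to the diagonal and $\chi = \chi^j$. I would then build a symmetric, $T_3$-vanishing isobaric polynomial $P$ in the generators with the same diagonal restriction, using the computed values $\mathrm{Res}_{\lambda_1}\phi_2 = \eta^8$, $\mathrm{Res}_{\lambda_1}\psi_7 = E_6\eta^{16}$, and $\mathrm{Res}_{\lambda_1}\mathbf{E}_2 = E_4$ (every monomial occurring in $P$ automatically carries a factor of $\phi_2$ or $\psi_7$, hence vanishes on $T_3$). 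If such a $P$ exists, then $f^+ - P$ vanishes on the diagonal, hence to order at least two on $T_1$ since it is symmetric, and still vanishes on $T_3$; it is therefore divisible by $\phi_1$, and the weight-$(k-1)$ quotient is handled by induction.

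The main obstacle is precisely the existence of this $P$. The three restrictions above generate only the subring $\mathbb{C}[\eta^8, E_6\eta^{16}, E_4]$, and in the nontrivial character sectors $j=1,2$ this subring is \emph{proper}: nothing restricts to $\eta^8 E_6$ (the normalized weight-$10$ form of character $\epsilon$), since an odd power of $E_6$ forces a factor of $\psi_7$ and hence a compensating power of $\Delta = \eta^{24}$. Correspondingly there is no symmetric generator-monomial of weight $5$ and character $\chi$ at all, so the whole scheme collapses unless $\mathrm{Res}_{\lambda_1} f^+$ can never land in this missing, $\Delta$-free, odd-$E_6$ part. The resolution I expect is that $f^+$ always carries enough additional forced zeros to rule this out: it vanishes on $T_3$, and in the sectors where $\chi\bigl(\begin{psmallmatrix}\varepsilon&0\\0&\varepsilon^{-1}\end{psmallmatrix}\bigr)\neq(-1)^k$ it vanishes on $T_{37}$ by the forced-zero lemma, so $\mathrm{Res}_{\lambda_1} f^+$ acquires zeros at the CM points of $T_1\cap T_3$ and $T_1\cap T_{37}$. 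As these points are distinct from the elliptic fixed points $i,\rho$ where $\eta^8 E_6$ already vanishes, a valence count shows the extra zeros overdetermine the bad components and force them to vanish, pushing $\mathrm{Res}_{\lambda_1} f^+$ into the reachable subring. Carrying out this forced-zero and dimension bookkeeping across the finitely many low-weight character sectors is the genuine content of the argument, and it is exactly here that the explicit choice of Borcherds products and the ancillary computations are needed.
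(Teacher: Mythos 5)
Your antisymmetric and even-weight reductions are sound and essentially identical to the paper's (arrange vanishing on $T_3$ via the restriction lemmas, use the forced zero or the diagonal matching against $\mathrm{Res}_{\lambda_1}\phi_2 = \eta^8$ and $\mathrm{Res}_{\lambda_1}\mathbf{E}_2 = E_4$, divide by $\phi_1$). But the proposal is not a proof, for the reason you yourself name and then defer: in the symmetric odd-weight case, in the character sector restricting to the multiplier of $\eta^8$, the diagonal restriction is a cusp form of weight $2k \equiv 2 \pmod 4$ of the shape $\eta^8 E_6 P(E_4,\Delta)$, and the $\Delta$-free part $\eta^8 E_6 E_4^m$ is not the restriction of any isobaric polynomial in the seven generators. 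Ruling this component out \emph{is} the content of the proposition in that sector, so postponing it to future ``forced-zero and dimension bookkeeping'' leaves the induction step unproved. Worse, the fix as you sketch it cannot work: the forced zeros coming from $T_1 \cap T_3$ and $T_1 \cap T_{37}$ are a fixed finite set independent of $k$, while $\dim \eta^8 E_6\, \mathbb{C}[E_4,\Delta]_{2k-10}$ grows linearly in $k$, so in large weight vanishing at those CM points alone cannot push $\mathrm{Res}_{\lambda_1} f^+$ into the reachable subring. A repair along your lines does exist: first subtract elements of $\phi_2^2\psi_7\,\mathbb{C}[\mathbf{E}_2,\phi_2^3]$ --- note $\phi_2^2\psi_7$ is symmetric of character $\chi$, vanishes on both $T_3$ and $T_{37}$, and $\mathrm{Res}_{\lambda_1}(\phi_2^2\psi_7) = \eta^8 E_6 \Delta$ --- which realizes all of $\eta^8 E_6 \Delta\, \mathbb{C}[E_4,\Delta]$ and reduces the ambiguity to a single monomial $\alpha\, \eta^8 E_6 E_4^m$, whose zeros lie only at $i$, $\rho$ and the cusp; then one forced zero at a non-elliptic point of $T_1 \cap T_{37}$ kills $\alpha$ uniformly in $k$ (the existence of such a point still needs checking). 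Also, only the $\eta^8$ sector is deficient in odd weight; the $\eta^{16}$ sector is covered, since $E_6\eta^{16} = \mathrm{Res}_{\lambda_1}\psi_7$ is itself a generator restriction.

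For comparison: the paper's symmetric odd-weight case runs differently from yours. It does not first arrange vanishing on $T_3$; instead it uses that \emph{every} symmetric odd-weight form vanishes on $T_{37}$ --- the cubic characters never take the value $-1$, so your hedging about sectors where $\chi$ of the unit matrix differs from $(-1)^k$ is unnecessary --- subtracts a polynomial $P(\mathbf{E}_2,\phi_2,\psi_7)$ matching the diagonal restriction, and divides by $\phi_4$ (divisor $T_1 + T_{37}$) rather than by $\phi_1$, so no $T_3$-vanishing is needed there. However, at exactly the point you flagged, the paper's own bookkeeping is loose: its claim that when $g$ carries the character of $\eta^8$ the quotient $\eta^{-8}g$ has weight $\equiv 0 \pmod 4$ fails for odd $k$ (then $2k - 4 \equiv 2 \pmod 4$; e.g. $\mathrm{Res}_{\lambda_1}(\phi_2^2\psi_7\mathbf{E}_2) = E_4E_6\eta^{32}$ has $\eta^{-8}$-quotient $E_4E_6\Delta$ of weight $22$), so the paper tacitly assumes that the $\Delta$-free component never occurs --- the very statement you could not establish. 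Your diagnosis thus pinpoints the one genuinely delicate step of this proposition, but identifying the needed lemma without proving it is a gap, not a proof.
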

\begin{proof} We induct on the weight of $f$. If $f$ has weight zero, then it is constant. Otherwise, we again split $f$ into its symmetric and antisymmetric parts and argue for each part separately. \\

Case 1: suppose $f$ is symmetric and has odd weight. Then $f$ has a forced zero on $T_{37}$ so its restriction $g = \mathrm{Res}_{\lambda_1} f$ to the diagonal is a cusp form for $SL_2(\mathbb{Z})$ of weight $2$ mod $4$ for some cubic character, and is therefore some polynomial expression $P$ in $E_4, \eta^8, E_6 \eta^{16}$. More precisely, if $g$ has the character of $\eta^8$ then $\eta^{-8}g$ has weight $0$ mod $4$ and no character, and is therefore a polynomial in $E_4$ and $\Delta = (\eta^8)^3$. If $g$ has the character of $\eta^{16}$ then $\eta^{-16} g$ has weight $2$ mod $4$ and no character, and therefore is also divisible by $E_6$, and $E_6^{-1} \eta^{-16} g$ is holomorphic of weight $0$ mod $4$ so the previous sentence applies. Finally, if $g$ has the trivial character then it vanishes at $i$ and is therefore divisible by $E_6$. As $g$ is a cusp form, we can divide by $\Delta E_6 = \eta^8 \cdot E_6 \eta^{16}$ with the remainder having weight $0$ mod $4$ again. Altogether, we can write $$\mathrm{Res}_{\lambda_1} f = P(E_4,\eta^8,E_6 \eta^{16}) = \mathrm{Res}_{\lambda_1} P(\mathbf{E}_2,\phi_2,\psi_7).$$ In particular, $f - P(\mathbf{E}_2,\phi_2,\psi_7)$ vanishes on $T_1$ and is still symmetric and of odd weight so it continues to vanish on $T_{37}$. Therefore we can divide by $\phi_4$ (which has simple zeros only on $T_1$ and $T_{37}$) to obtain a holomorphic form $\frac{f - P(\mathbf{E}_2,\phi_2,\psi_7)}{\phi_4}$ of lower weight which is a polynomial as in the claim by induction. This implies the claim for $f$ itself. \\

Case 2: suppose $f$ is antisymmetric and has odd weight. We use the previous lemmas and assume without loss of generality that $f$ already vanishes on $T_3$. As an antisymmetric form, $f$ has a forced zero on $T_1$. In particular it is divisible by $\phi_1$ and $\frac{f}{\phi_1}$ has smaller weight than $f$ so the claim follows by induction. \\

Case 3: Now suppose that $f$ has even weight; and again by the previous lemmas, assume without loss of generality that $f$ already vanishes on $T_3$. Then $f$ is a cusp form, and its restriction to the diagonal is a cusp form of level $1$ whose weight is divisible by $4$ and which is therefore some polynomial in $E_4, \eta^8$ in which all monomials contain $\eta^8$. In particular, $\mathrm{Res}_{\lambda_1} f$ is the restriction of some polynomial $Q(\mathbf{E}_2,\phi_2)$ in which all monomials contain $\phi_2$. Since $\phi_2$ vanishes on $T_3$, we find that $f - Q(\mathbf{E}_2,\phi_2)$ vanishes on both $T_1$ and $T_3$, so it is divisible by $\phi_1$; as $\frac{f - Q(\mathbf{E}_2,\phi_2)}{\phi_1}$ has weight strictly less than that of $f$, the claim follows by induction.
\end{proof}

\begin{prop} The graded ring $M_{*,sym}(\Gamma_K)$ of modular forms with symmetric characters is presented by seven generators $$\phi_1,\psi_1,\mathbf{E}_2,\phi_2,\psi_2,\phi_4,\psi_7$$ of weights $1,1,2,2,2,4,7$ and by 9 relations $R_{3,1},R_{4,1},R_{4,\chi},R_{8,1},R_{8,\chi},R_{8,\chi^2},R_{9,\chi},R_{11,1},R_{14,\chi}$ in weights $3$ through $14$.
\end{prop}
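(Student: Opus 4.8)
The plan is to use the surjection furnished by the previous proposition and to detect every relation by restriction to the two curves $T_1$ and $T_3$, exploiting $\phi_1$ as a ``convenient'' divisor. Write $A$ for the weighted polynomial ring on the seven indeterminates $\phi_1,\psi_1,\mathbf{E}_2,\phi_2,\psi_2,\phi_4,\psi_7$, each carrying the weight and symmetric character of the corresponding form, and let $\pi\colon A\to M_{*,sym}(\Gamma_K)$ be evaluation. The previous proposition says $\pi$ is surjective, so $M_{*,sym}(\Gamma_K)=A/\mathcal I$ with $\mathcal I=\ker\pi$, and the task is to show that $\mathcal I$ equals the ideal $\mathcal J$ generated by the nine listed relations. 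The structural input I would lean on is that $\phi_1$ has divisor exactly $T_1+T_3$ with simple zeros: for a form of fixed symmetry type, vanishing of its restrictions to $T_1$ and $T_3$ is equivalent to divisibility by $\phi_1$, and since $M_{*,sym}(\Gamma_K)$ is an integral domain, $\phi_1$ is a nonzerodivisor whose quotient out of a holomorphic form is again holomorphic at the cusps by the G\"otzky-Koecher principle.

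First I would compute the ideal $\overline{\mathcal K}\subseteq\bar A$ of all relations among the restrictions of the six generators other than $\phi_1$ to $T_1$ and $T_3$ simultaneously, where $\bar A\subseteq A$ is the subring on those six indeterminates and $\overline{\mathcal K}$ is the kernel of the combined restriction map to modular forms for $SL_2(\mathbb Z)$ (with cubic characters) and for $\Gamma_0(3)$. Because the images are the explicit elliptic modular forms $0,E_4,\eta^8,0,0,E_6\eta^{16}$ on $T_1$ and $s_2,e_2^2,0,s_2e_2,s_2^2e_4,0$ on $T_3$, this is a finite computation with $q$-expansions. Its generators are of two kinds: the mixed products of a generator vanishing on $T_1$ (namely $\psi_1,\psi_2,\phi_4$) with one vanishing on $T_3$ (namely $\phi_2,\psi_7$), together with liftings of the single relation $E_6^2=E_4^3-1728\Delta$ on $T_1$ (yielding $\psi_7^2-\mathbf{E}_2^3\phi_2^4+1728\phi_2^7$ in weight $14$) and of the relation $e_4^2=e_2^4-108e_2s_6$ on $T_3$ (yielding relations such as $\psi_2^2-\psi_1^2\mathbf{E}_2$ in weight $4$ and $\phi_4^2-\psi_2^4+108\psi_1^6\psi_2$ in weight $8$).

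With $\overline{\mathcal K}$ in hand I would run the following descent, working one symmetry type and character at a time. Given homogeneous $P\in\mathcal I$ of weight $k$, split off the $\phi_1$-free part, $P=P_0+\phi_1P_1$; then $\pi(P_0)=-\phi_1\pi(P_1)$ vanishes on $T_1$ and $T_3$, so $P_0\in\overline{\mathcal K}$. Reducing each generator of $\overline{\mathcal K}$ modulo $\mathcal J$ makes it divisible by $\phi_1$, so $P\equiv\phi_1P_1'\pmod{\mathcal J}$; since $\phi_1$ is a nonzerodivisor, $P_1'\in\mathcal I$ has weight $k-1$, and induction on $k$ (base case $k\le 0$, where forms are constant) completes the argument. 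To pin down the $\phi_1$-correction of each lifting I would restrict to auxiliary curves such as $T_4,T_{10},T_{37}$ on which $\phi_1$ does not vanish, or equivalently record the leading transverse Taylor coefficients along $T_1$ as in the generation proof; in either case only elliptic modular forms enter, so no Fourier expansion of a Hilbert modular form is ever needed.

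The main obstacle is bookkeeping rather than anything conceptual. The generating set produced by the descent is not minimal: for instance the mixed product $\phi_4\phi_2$ sits in weight $6$, yet the corresponding relation turns out to be a consequence of $R_{3,1}$ together with higher relations and does not survive in the minimal presentation, while a genuinely new relation of trivial character appears in weight $8$. One must therefore reduce the candidate relations against one another and sort them carefully by weight and character to reach the asserted nine relations in weights $3$ through $14$ (with three in weight $8$). I would confirm both completeness and minimality by a Gr\"obner basis computation in Macaulay2, comparing the Hilbert series of $A/\mathcal J$ with the Hilbert series of $M_{*,sym}(\Gamma_K)$ assembled from the dimensions $\dim M_{k,\chi^j}(\Gamma_K)$; agreement of the two series, together with the surjection $A/\mathcal J\twoheadrightarrow M_{*,sym}(\Gamma_K)$, forces $\mathcal J=\mathcal I$ and hence the stated presentation.
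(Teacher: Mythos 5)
Your strategy is essentially viable but takes a genuinely different route from the paper. You detect relations by restriction to $T_1\cup T_3$ and descend through the weight-one form $\phi_1$ (divisor $T_1+T_3$): since, for a form of definite symmetry type, vanishing of both restrictions is equivalent to divisibility by $\phi_1$ (the paper's remark on symmetric/antisymmetric forms plus Koecher), each step trades a relation of weight $k$ for one of weight $k-1$, and induction closes the argument uniformly in all weights. The paper instead constructs, via the obstruction principle, an auxiliary Borcherds product $f_{16}$ of weight $16$ with divisor exactly $T_{21}+T_{33}$; Koecher's principle then makes the combined restriction to $T_{21},T_{33}$ \emph{injective} on all forms of weight $<16$, so the nine relations (all of weight $\le 14$) are both found and certified by pure elliptic-modular-form computation with no correction terms, and relations of weight $>16$ are handled by reducing against the nine to eliminate $\phi_2,\psi_2,\phi_4,\psi_7$ and invoking the algebraic independence of $\phi_1,\psi_1,\mathbf{E}_2$. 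What the paper's injectivity certificate buys is precisely what your scheme must labor for: restriction to $T_1,T_3$ has a large kernel, so every generator of $\overline{\mathcal K}$ needs an explicit $\phi_1$-correction, and proving the corrected candidate is \emph{exactly} zero (i.e.\ that $\mathcal J\subseteq\mathcal I$ in the first place) requires iterating your descent down to weight $0$ or some divisibility/weight bound --- matching finitely many auxiliary restrictions does not by itself prove vanishing. Two concrete caveats. First, $4$ and $10$ are not norms of integers of $\mathbb{Q}(\sqrt{37})$ (since $2$ is inert), so apart from the diagonal sitting inside $T_4$ --- on which $\phi_1$ vanishes --- the curves $T_4$ and $T_{10}$ have no component of the form $\{(\lambda\tau,\lambda'\tau)\}$, and the paper's $\mathrm{Res}_\lambda$ machinery does not apply to them; you should rely on your transverse Taylor coefficients along $T_1$, or on the paper's $T_{21},T_{33}$, instead. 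Second, your closing Hilbert-series comparison is circular as stated: the paper \emph{derives} $\dim M_{k,\chi^j}(\Gamma_K)$ from the presentation rather than conversely, and importing an independent dimension formula is delicate in weight $1$; fortunately your descent, once the finite checks on the generators of $\overline{\mathcal K}$ are carried out, makes that comparison unnecessary.
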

\begin{proof} We take the positive-definite integers $$\lambda_{21} = \frac{11 + \sqrt{37}}{2}, \; \lambda_{33} = \frac{13 + \sqrt{37}}{2}$$ of norm $21$ and $33$ which lie in a common Weyl chamber for each of the Borcherds products above. The obstruction principle shows that there is a nearly-holomorphic modular form in $B_{37}$ whose Fourier expansion begins $2q^{-33} + 2q^{-21} + 32 + ...$ and which therefore lifts to a Hilbert modular form $f_{16}$ of weight $16$ with simple zeros exactly along $T_{21}$ and $T_{33}$. By the G\"otzky-Koecher principle (arguing as in the previous section) every nonzero modular form which vanishes along both $T_{21}$ and $T_{33}$ must have weight at least $16$ so we are able to find all relations up to weight $16$ by only computing relations among the restrictions to $T_{21},T_{33}$. \\

In this way we compute the relations
\begin{align*} R_{3,1} &= \psi_1 \phi_2 - \phi_1 \psi_2; \\ R_{4,1} &= 5\phi_2 \psi_2 - 5\phi_1 \psi_1 \mathbf{E}_2 - 4 \phi_1^2 \psi_1^2 - 20\phi_1^2 \phi_2; \\ R_{4,\chi} &= 5 \psi_2^2 - 5 \psi_1^2 \mathbf{E}_2 -4 \phi_1 \psi_1^3 - 20 \phi_1^2 \psi_2; \\ R_{8,1} &= 5 \phi_2^2 \phi_4 - 5 \phi_1^2 \mathbf{E}_2 \phi_4 - 20\phi_1 \psi_7 + 36 \phi_1^3 \psi_1 \phi_4; \\ R_{8,\chi} &= \psi_1 \psi_7 - \phi_1^2 \phi_2 \phi_4 - 2\phi_1^2 \psi_1^2 \phi_4; \\ R_{8,\chi^2} &= 125 \phi_4^2 - 125 \psi_1^4 \mathbf{E}_2^2 + 13500 \psi_1^6 \psi_2 - 250\phi_1 \psi_1 \psi_2 \mathbf{E}_2^2 + 38050\phi_1 \psi_1^5 \mathbf{E}_2 - 125 \phi_1^2 \mathbf{E}_2^3 + 27000\phi_1^2 \phi_2^3 \\ &\quad \quad + 35850\phi_1^2 \psi_1^2 \psi_2 \mathbf{E}_2 + 33895\phi_1^2 \psi_1^6 + 11700\phi_1^3 \psi_1 \mathbf{E}_2^2 + 163340 \phi_1^3 \psi_1^3 \psi_2 + 42750 \phi_1^4 \phi_2 \mathbf{E}_2 \\ &\quad\quad + 119710\phi_1^4 \psi_1^2 \mathbf{E}_2 + 82466 \phi_1^5 \psi_1^3 + 447700 \phi_1^6 \psi_2 + 3375 \phi_1^8; \\ R_{9,\chi} &= 5 \psi_2 \psi_7 - 5\psi_1^3 \mathbf{E}_2 \phi_4 - 20 \phi_1^2 \psi_7 - 10 \phi_1^2 \psi_1 \psi_2 \phi_4 + 36 \phi_1^4 \psi_1 \phi_4; \\ R_{11,1} &= 500\phi_4 \psi_7 -125\phi_1\phi_2^2 \mathbf{E}_2^3 + 27000\phi_1\phi_2^5 -1000\phi_1^2\psi_1^5 \mathbf{E}_2^2 + 108000\phi_1^2 \psi_1^7 \psi_2 + 125\phi_1^3 \mathbf{E}_2^4 \\ &\quad\quad + 15750\phi_1^3 \phi_2^3 \mathbf{E}_2 -2500\phi_1^3 \psi_1^2 \psi_2 \mathbf{E}_2^2 + 358400\phi_1^3 \psi_1^6 \mathbf{E}_2 -1900\phi_1^4 \psi_1 \mathbf{E}_2^3 + 439000\phi_1^4 \psi_1^3 \psi_2 \mathbf{E}_2 \\ &\quad\quad + 314360\phi_1^4 \psi_1^7 + 50\phi_1^5 \phi_2 \mathbf{E}_2^2 + 236200\phi_1^5 \psi_1^2 \mathbf{E}_2^2 + 1658300\phi_1^5 \psi_1^4 \psi_2 + 1725760\phi_1^6 \psi_1^3 \mathbf{E}_2 \\ &\quad\quad + 1554640\phi1^7 \psi_2 \mathbf{E}_2 + 2571775\phi_1^7 \phi_2^2 + 1182416\phi_1^7 \psi_1^4 + 6680424\phi_1^8 \psi_1 \psi_2 -3375\phi_1^9 \mathbf{E}_2 + 24300\phi_1^{10} \psi_1; \end{align*} \begin{align*} R_{14,\chi} &= 10000\psi_7^2 -625\phi_2^4 \mathbf{E}_2^3 + 135000\phi_2^7 + 1250\phi_1^2 \phi_2^2 \mathbf{E}_2^4 -56250\phi_1^2 \phi_2^5 \mathbf{E}_2 -625\phi_1^4 \mathbf{E}_2^5 -78500\phi_1^4 \phi_2^3 \mathbf{E}_2^2 \\ &\quad\quad -40000\phi_1^4 \psi_1^6 \mathbf{E}_2^2 + 4320000\phi_1^4 \psi_1^8 \psi_2 -120000\phi_1^5 \psi_1^3 \psi_2 \mathbf{E}_2^2 + 16496000\phi_1^5 \psi_1^7 \mathbf{E}_2 -56250\phi_1^6 \phi_2 \mathbf{E}_2^3 \\ &\quad\quad + 16746875\phi_1^6 \phi_2^4 -129600\phi_1^6 \psi_1^2 \mathbf{E}_2^3 + 24728000\phi_1^6 \psi_1^4 \psi_2 \mathbf{E}_2 + 14302400\phi_1^6 \psi_1^8 + 18188000\phi_1^7 \psi_1^3 \mathbf{E}_2^2 \\ &\quad\quad + 81259200\phi_1^7 \psi_1^5 \psi_2 + 6064800\phi_1^8 \psi_2 \mathbf{E}_2^2 + 24373050\phi_1^8 \phi_2^2 \mathbf{E}_2 + 109220400\phi_1^8 \psi_1^4 \mathbf{E}_2 \\ &\quad\quad + 133829600\phi_1^9 \psi_1 \psi_2 \mathbf{E}_2 + 73829440\phi_1^9 \psi_1^5 + 16875\phi_1^{10} \mathbf{E}_2^2 + 424151360\phi_1^{10} \psi_1^2 \psi_2 \\ &\quad \quad + 229181280\phi_1^{11} \psi_1 \mathbf{E}_2 + 917697120\phi_1^{12} \phi_2 + 184414224\phi_1^{12} \psi_1^2.\end{align*}

Any relation of weight greater than $16$ can be reduced against the relations above to eliminate the variables $\phi_2,\psi_2,\phi_4,\psi_7$, and the remaining generators $\phi_1,\psi_1,\mathbf{E}_2$ are algebraically independent (because the restrictions of $\psi_1, \mathbf{E}_2$ to $T_3$ are already algebraically independent, and $\phi_1$ vanishes along $T_3$). Therefore the relations above are enough. Let $\mathfrak{e}_1,\mathfrak{e}_{\chi},\mathfrak{e}_{\chi^2}$ be multiplicative symbols which represent the symmetric characters of $SL_2(\mathcal{O}_K)$; then we get the Hilbert series $$\sum_{k=0}^{\infty} \sum_{i \in \mathbb{Z}/3\mathbb{Z}} \mathrm{dim}\, M_{k,\chi^i}(\Gamma_K) t^k \mathfrak{e}_{\chi^i} = \frac{1 + t^2 (\mathfrak{e}_{\chi} + \mathfrak{e}_{\chi^2}) - t^3 + t^4 (\mathfrak{e}_{\chi} + \mathfrak{e}_{\chi^2}) - t^5 \mathfrak{e}_{\chi} + 2t^6 + t^6 \mathfrak{e}_{\chi^2} - t^7 \mathfrak{e}_{\chi}}{(1-t \mathfrak{e}_{\chi}) (1 - t \mathfrak{e}_{\chi^2})(1-t^2)}.$$ (This can be computed in Macaulay2, for example; the exponents and characters in the denominator come from $\phi_1,\psi_1,\mathbf{E}_2$.) With a bit of algebra (e.g. replacing $\frac{1}{1 - t \mathfrak{e}_{\chi}}$ by $\frac{1 +t \mathfrak{e}_{\chi} + t^2 \mathfrak{e}_{\chi^2}}{1 - t^3}$) it follows that the dimensions of Hilbert modular forms with trivial character have the generating series $$\sum_{k=0}^{\infty} \mathrm{dim}\, M_k(\Gamma_K) t^k = \frac{1 - t + t^2 + t^3 + t^4 + t^5 + t^6 - t^7 + t^8}{(1 - t)(1-t^2)(1-t^3)}.$$
\end{proof}

\begin{prop} The graded ring of modular forms $M_*(\Gamma_K)$ is presented by the $15$ generators $$\mathbf{E}_2, \phi_1 \psi_1, \phi_1^3, \phi_1 \psi_2, \psi_1^3, \phi_2 \psi_2, \psi_1^2 \psi_2, \psi_1 \phi_4, \phi_1 \phi_2^2, \phi_2^3, \psi_2 \phi_4, \phi_1^2 \phi_4, \phi_1 \phi_2 \phi_4, \phi_2^2 \phi_4, \phi_2 \psi_7$$ of weights $2,2,3,3,3,4,4,5,5,6,6,6,7,8,9$ and by 77 relations in weights 6 through 18.
\end{prop}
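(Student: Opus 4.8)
The plan is to deduce the presentation of $M_*(\Gamma_K)$ from the complete presentation of the symmetric-character ring established in the preceding proposition. The key observation is that $M_*(\Gamma_K)$ is precisely the trivial-character part of $M_{*,sym}(\Gamma_K) = R/I$, where $R = \mathbb{C}[\phi_1,\psi_1,\mathbf{E}_2,\phi_2,\psi_2,\phi_4,\psi_7]$ and $I = (R_{3,1},\dots,R_{14,\chi})$. The group $G = \langle \chi\rangle \cong \mathbb{Z}/3\mathbb{Z}$ of symmetric characters acts on $R$ by scaling each generator by its character value (so $\phi_1,\phi_2,\phi_4$ are scaled by $\zeta = e^{2\pi i/3}$, the forms $\psi_1,\psi_2,\psi_7$ by $\zeta^{-1}$, and $\mathbf{E}_2$ is fixed), and $M_*(\Gamma_K) = M_{*,sym}(\Gamma_K)^G$. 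Since each relation $R_{j,\psi}$ is a $G$-eigenvector, the ideal $I$ is $G$-stable, and because $G$ is finite the formation of $G$-invariants is exact over $\mathbb{C}$; hence $(R/I)^G = R^G/(I \cap R^G)$. This turns the statement into an explicit invariant-theory computation.

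For the generators I would first produce a generating set of $R^G$ and then cut it down modulo $I$. By Noether's degree bound, $R^G$ is generated by $G$-invariant monomials of total degree at most $|G| = 3$: namely $\mathbf{E}_2$, the nine products $\phi_i\psi_j$, and the twenty cubic monomials $\phi_i\phi_j\phi_k$ and $\psi_i\psi_j\psi_k$. Their images therefore generate $M_*(\Gamma_K)$, and it remains to eliminate the redundant ones using the relations of the preceding proposition. For instance $R_{3,1}$ gives $\phi_2\psi_1 = \phi_1\psi_2$; the relation $R_{4,1}$ rewrites $\phi_1^2\phi_2$ in terms of $\phi_2\psi_2$, $(\phi_1\psi_1)\mathbf{E}_2$ and $(\phi_1\psi_1)^2$; the relation $R_{8,1}$ expresses $\phi_1\psi_7$ through $\phi_2^2\phi_4$, $\mathbf{E}_2\,\phi_1^2\phi_4$ and $\phi_1^3\cdot\psi_1\phi_4$; and $R_{11,1}$ handles $\phi_4\psi_7$. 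The remaining cubic monomials reduce analogously against $R_{4,\chi}$, $R_{9,\chi}$ and the others, which shows that the fifteen listed products suffice.

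For the relations I would compute the kernel of the surjection $\mathbb{C}[Y_1,\dots,Y_{15}] \to M_*(\Gamma_K)$ sending the free variables to the fifteen generators, working directly from the presentation $R/I$ together with the $G$-action. This is a standard, if sizeable, elimination performed in Macaulay2; it returns $77$ relations, homogeneous of weights between $6$ and $18$, which are recorded in the ancillary material. To certify that these $77$ relations generate the whole ideal (and that none is redundant), I would compare Hilbert series: the quotient of $\mathbb{C}[Y_1,\dots,Y_{15}]$ by the ideal they generate must reproduce the generating function
$$\sum_{k=0}^{\infty} \mathrm{dim}\, M_k(\Gamma_K)\, t^k = \frac{1 - t + t^2 + t^3 + t^4 + t^5 + t^6 - t^7 + t^8}{(1-t)(1-t^2)(1-t^3)}$$
already obtained in the preceding proposition. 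Matching this series in each weight confirms simultaneously that no relation is missing and that no spurious ones have been introduced.

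The hard part is computational rather than conceptual: the invariant-ring elimination in fifteen variables produces a large ideal, and the delicate point is to verify that exactly $77$ relations up to weight $18$ \emph{generate} it rather than merely lie in it. Here the fact that the preceding proposition already pins down the Hilbert series of the trivial-character ring is essential, since it reduces completeness to a finite-dimensional dimension count weight by weight. One should also note that the denominator $(1-t)(1-t^2)(1-t^3)$ is inherited from the weight-one symmetric generators $\phi_1,\psi_1$ and does not correspond to a homogeneous system of parameters among the fifteen generators themselves, so the resulting presentation is far from a complete intersection and the number of relations is correspondingly large.
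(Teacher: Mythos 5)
Your proposal is correct and follows essentially the same route as the paper: both identify $M_*(\Gamma_K)$ as the trivial-character (invariant) part of the presented ring $M_{*,sym}(\Gamma_K)$, take the invariant monomials in $\mathbf{E}_2,\phi_1,\psi_1,\phi_2,\psi_2,\phi_4,\psi_7$ as generators, prune them to the $15$ listed using relations such as $\psi_1\phi_2 = \phi_1\psi_2$, and obtain the $77$ relations by intersecting the relation ideal with the invariant subring via a Gr\"obner basis computation in Macaulay2. Your added details (Noether's bound giving the $30$ invariant monomials of degree at most $3$, exactness of $G$-invariants yielding $(R/I)^G = R^G/(I\cap R^G)$, and the Hilbert-series check for completeness) are sound elaborations of what the paper leaves implicit.
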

A number of the 77 defining relations are those that are implied by the notation, e.g. $(\phi_1 \psi_1)^3 = \phi_1^3 \cdot \psi_1^3$.
\begin{proof} As generators we can take the monomials in $\mathbf{E}_2,\phi_1,\psi_1,\phi_2,\psi_2,\phi_4,\psi_7$ that have trivial character (and that cannot be further split into monomials with trivial character). Many of these monomials turn out not to be necessary due to relations such as $\psi_1 \phi_2 = \phi_1 \psi_2$. The 15 given above are minimal in that sense. \\

The ideal of relations is given by intersecting the ideal of relations from the previous lemma with the subring of Hilbert modular forms with trivial character. Finding this intersection and minimal generators for it is a straightforward Gr\"obner basis computation which was done in Macaulay2. We leave the explicit relations to the supplementary material.
\end{proof}

\bibliographystyle{plainnat}
\bibliography{\jobname}

\begin{thebibliography}{21}
\providecommand{\natexlab}[1]{#1}
\providecommand{\url}[1]{\texttt{#1}}
\expandafter\ifx\csname urlstyle\endcsname\relax
  \providecommand{\doi}[1]{doi: #1}\else
  \providecommand{\doi}{doi: \begingroup \urlstyle{rm}\Url}\fi

\bibitem[Borcherds(1995)]{B1}
Richard Borcherds.
\newblock Automorphic forms on {${\rm O}_{s+2,2}({\bf R})$} and infinite
  products.
\newblock \emph{Invent. Math.}, 120\penalty0 (1):\penalty0 161--213, 1995.
\newblock ISSN 0020-9910.
\newblock \doi{10.1007/BF01241126}.
\newblock URL \url{https://doi.org/10.1007/BF01241126}.

\bibitem[Borcherds(1998)]{B2}
Richard Borcherds.
\newblock Automorphic forms with singularities on {G}rassmannians.
\newblock \emph{Invent. Math.}, 132\penalty0 (3):\penalty0 491--562, 1998.
\newblock ISSN 0020-9910.
\newblock \doi{10.1007/s002220050232}.
\newblock URL \url{https://doi.org/10.1007/s002220050232}.

\bibitem[Boylan and Skoruppa(2013)]{BS}
Hatice Boylan and Nils-Peter Skoruppa.
\newblock Linear characters of {$\rm SL_2$} over {D}edekind domains.
\newblock \emph{J. Algebra}, 373:\penalty0 120--129, 2013.
\newblock ISSN 0021-8693.
\newblock \doi{10.1016/j.jalgebra.2012.08.029}.
\newblock URL \url{https://doi.org/10.1016/j.jalgebra.2012.08.029}.

\bibitem[Bruinier(2002)]{Br}
Jan~Hendrik Bruinier.
\newblock \emph{Borcherds products on {O}(2, {$l$}) and {C}hern classes of
  {H}eegner divisors}, volume 1780 of \emph{Lecture Notes in Mathematics}.
\newblock Springer-Verlag, Berlin, 2002.
\newblock ISBN 3-540-43320-1.
\newblock \doi{10.1007/b83278}.
\newblock URL \url{http://dx.doi.org/10.1007/b83278}.

\bibitem[Bruinier(2008)]{H1}
Jan~Hendrik Bruinier.
\newblock Hilbert modular forms and their applications.
\newblock In \emph{The 1-2-3 of modular forms}, Universitext, pages 104--179.
  Springer, Berlin, 2008.
\newblock \doi{10.1007/978-3-540-74119-0_1}.
\newblock URL \url{http://dx.doi.org/10.1007/978-3-540-74119-0_1}.

\bibitem[Bruinier and Bundschuh(2003)]{BB}
Jan~Hendrik Bruinier and Michael Bundschuh.
\newblock On {B}orcherds products associated with lattices of prime
  discriminant.
\newblock \emph{Ramanujan J.}, 7\penalty0 (1-3):\penalty0 49--61, 2003.
\newblock ISSN 1382-4090.
\newblock \doi{10.1023/A:1026222507219}.
\newblock URL \url{http://dx.doi.org/10.1023/A:1026222507219}.
\newblock Rankin memorial issues.

\bibitem[Demb\'el\'e and Voight(2013)]{DV}
Lassina Demb\'el\'e and John Voight.
\newblock Explicit methods for {H}ilbert modular forms.
\newblock In \emph{Elliptic curves, {H}ilbert modular forms and {G}alois
  deformations}, Adv. Courses Math. CRM Barcelona, pages 135--198.
  Birkh\"auser/Springer, Basel, 2013.
\newblock \doi{10.1007/978-3-0348-0618-3_4}.
\newblock URL \url{https://doi.org/10.1007/978-3-0348-0618-3_4}.

\bibitem[Geer(1978)]{G}
Gerard van~der Geer.
\newblock Hilbert modular forms for the field {$\mathbb{Q}(\sqrt{6})$}.
\newblock \emph{Math. Ann.}, 233\penalty0 (2):\penalty0 163--179, 1978.
\newblock ISSN 0025-5831.
\newblock \doi{10.1007/BF01421924}.
\newblock URL \url{https://doi.org/10.1007/BF01421924}.

\bibitem[Geer and Zagier(1977)]{GZ}
Gerard van~der Geer and Don Zagier.
\newblock The {H}ilbert modular group for the field {$\mathbb{Q}(\sqrt{ 13})$}.
\newblock \emph{Invent. Math.}, 42:\penalty0 93--133, 1977.
\newblock ISSN 0020-9910.
\newblock \doi{10.1007/BF01389785}.
\newblock URL \url{https://doi.org/10.1007/BF01389785}.

\bibitem[Gehre et~al.(2013)Gehre, Kreuzer, and Raum]{GKR}
Dominic Gehre, Judith Kreuzer, and Martin Raum.
\newblock Computing {B}orcherds products.
\newblock \emph{LMS J. Comput. Math.}, 16:\penalty0 200--215, 2013.
\newblock ISSN 1461-1570.
\newblock \doi{10.1112/S1461157013000156}.
\newblock URL \url{https://doi.org/10.1112/S1461157013000156}.

\bibitem[Gundlach(1963)]{Gu}
Karl-Bernhard Gundlach.
\newblock Die {B}estimmung der {F}unktionen zur {H}ilbertschen {M}odulgruppe
  des {Z}ahlk\"orpers {$\mathbb{Q}(\sqrt{5})$}.
\newblock \emph{Math. Ann.}, 152:\penalty0 226--256, 1963.
\newblock ISSN 0025-5831.
\newblock \doi{10.1007/BF01470882}.
\newblock URL \url{https://doi.org/10.1007/BF01470882}.

\bibitem[Hammond(1966)]{H}
William Hammond.
\newblock The modular groups of {H}ilbert and {S}iegel.
\newblock \emph{Amer. J. Math.}, 88:\penalty0 497--516, 1966.
\newblock ISSN 0002-9327.
\newblock \doi{10.2307/2373204}.
\newblock URL \url{https://doi.org/10.2307/2373204}.

\bibitem[Hermann(1981)]{He}
Carl~Friedrich Hermann.
\newblock Symmetrische {H}ilbertsche {M}odulformen und {M}odulfunktionen zu
  {$\mathbb{Q}(\sqrt{17})$}.
\newblock \emph{Math. Ann.}, 256\penalty0 (2):\penalty0 191--197, 1981.
\newblock ISSN 0025-5831.
\newblock \doi{10.1007/BF01450797}.
\newblock URL \url{https://doi.org/10.1007/BF01450797}.

\bibitem[Hirzebruch and van~de Ven(1974)]{HV}
Friedrich Hirzebruch and Antonius van~de Ven.
\newblock Hilbert modular surfaces and the classification of algebraic
  surfaces.
\newblock \emph{Invent. Math.}, 23:\penalty0 1--29, 1974.
\newblock ISSN 0020-9910.
\newblock \doi{10.1007/BF01405200}.
\newblock URL \url{https://doi.org/10.1007/BF01405200}.

\bibitem[Hirzebruch and Zagier(1976)]{HZ}
Friedrich Hirzebruch and Don Zagier.
\newblock Intersection numbers of curves on {H}ilbert modular surfaces and
  modular forms of {N}ebentypus.
\newblock \emph{Invent. Math.}, 36:\penalty0 57--113, 1976.
\newblock ISSN 0020-9910.
\newblock \doi{10.1007/BF01390005}.
\newblock URL \url{http://dx.doi.org/10.1007/BF01390005}.

\bibitem[Ma(2018)]{Ma}
Shouhei Ma.
\newblock Quasi-pullback of {B}orcherds products.
\newblock Preprint, 2018.
\newblock URL \url{arXiv:1801.08333}.

\bibitem[Mayer(2007)]{M}
Sebastian Mayer.
\newblock Hilbert modular forms for the fields
  $\mathbb{Q}(\sqrt{5}),\mathbb{Q}(\sqrt{13})$ and $\mathbb{Q}(\sqrt{17})$.
\newblock Dissertation (advisers A. Krieg, J.H. Bruinier), RWTH Aachen, 2007.
\newblock URL
  \url{http://www.matha.rwth-aachen.de/~mayer/homepage/dissertation-S-Mayer-revised-edition.pdf}.

\bibitem[Thomas and Vasquez(1983)]{TV}
Emery Thomas and Alphonse Vasquez.
\newblock Rings of {H}ilbert modular forms.
\newblock \emph{Compositio Math.}, 48\penalty0 (2):\penalty0 139--165, 1983.
\newblock ISSN 0010-437X.
\newblock URL \url{http://www.numdam.org/item?id=CM_1983__48_2_139_0}.

\bibitem[Voight and Zureick-Brown(2015)]{VZ}
John Voight and David Zureick-Brown.
\newblock The canonical ring of a stacky curve.
\newblock Preprint, 2015.
\newblock URL \url{arxiv:1501.04657}.

\bibitem[Williams()]{W}
Brandon Williams.
\newblock Remarks on the theta decomposition of {J}acobi forms.
\newblock \emph{J. Number Theory}, to appear.
\newblock URL \url{arxiv:1806.03324}.

\bibitem[Zagier(1975)]{Z}
Don Zagier.
\newblock Nombres de classes et formes modulaires de poids {$3/2$}.
\newblock \emph{C. R. Acad. Sci. Paris S\'er. A-B}, 281\penalty0 (21):\penalty0
  Ai, A883--A886, 1975.

\end{thebibliography}

\end{document}